\documentclass[preprint,12pt]{elsarticle}





\usepackage[margin=2.0cm]{geometry}

\usepackage{graphicx}
\usepackage{amssymb}
\usepackage{amsmath}
\usepackage{siunitx}
\usepackage[dvipsnames]{xcolor}
\usepackage{commath}
\usepackage{amsfonts}%
\usepackage{graphicx,verbatim}
\usepackage{tikz}
\usetikzlibrary{cd}

\usepackage[pagebackref=false,colorlinks=true, pdfstartview=FitV, linkcolor=violet,citecolor=red, urlcolor=blue]{hyperref}

\usepackage{amsthm}
\newtheorem{lthm}{Theorem}

\newtheorem{ldef}{Definition}


\usepackage{dirtytalk}

\usepackage{hyperref}



\newcommand{\cC}{\mathcal{C}}

\newcommand{\tA}{\widetilde{A}_3}
\newcommand{\tB}{\widetilde{B}_3}
\newcommand{\tC}{\widetilde{C}_3}
\newcommand{\tX}{\widetilde{X}_3}

\usepackage[font=small,labelfont=bf]{caption}

\newtheorem{theorem}{Theorem}
\newtheorem*{theorem*}{Theorem}
\newtheorem{definition}{Definition}
\newtheorem{proposition}{Proposition}

\newtheorem{example}{Example}

\newtheorem*{conjecture*}{Conjecture}
\newtheorem{convention}{Convention}
\newtheorem{question}{Question}
\newtheorem{remark}{Remark}
\usepackage{lineno}




\journal{ArXiv}

\begin{document}

\begin{frontmatter}


\title{Knots and Coxeter Groups}



\author{Dylan Burke}
\ead{dylan.burke@mail.utoronto.ca}

\author{Geoffrey Cuff-Chartrand}
\ead{geoffrey.cuffchartrand@mail.utoronto.ca}

\author{Malors Espinosa}
\ead{srolam.espinosalara@mail.utoronto.ca}

\author{Mateusz Kazimierczak}
\ead{mateusz.kazimierczak@mail.utoronto.ca}

\author{Mohammadamin Mobedi}
\ead{mobedimohammadamin@student.deanza.edu}


\begin{abstract}

In this paper we study knots created by galleries in the affine Coxeter complex of type $\tB$. We bound the stick number by $40$ and prove that the smallest length of threefold rotationally symmetric trefoils is $42$. We  construct explicit galleries that knot as $9_{35}, 9_{40}, 9_{41}$ and $9_{47}$ in a way that has threefold rotational symmetry. We explain the construction of these galleries for $9_{47}$ carefully. We conclude with three questions inspired by this work. 

Mathematics Subject Classification: 57K10, 28A80
\end{abstract}
\end{frontmatter}


\section{Motivation and Results}\label{sec: motivation and results}

This work is concerned with the construction of knots under certain restrictions. One can think of a knot as a piece of string whose endpoints are glued together and that does not self-intersect. In this context, the knot is built from the string but the string itself puts no further restrictions on the study of the knot. However, if instead of using string we glue sticks together, then the pieces with which the knot is built become important.

Mathematically, in the first context we consider (tame) knots as embeddings of the circle into $\mathbb{R}^3$. On the second context, we consider polygonal closed paths that do not self-intersect. A well known result states that to create a nontrivial knot with a polygonal curve requires at least $6$ straight lines (i.e. one needs at least six sticks).  This number is usually called the \textit{knotting number} or the \textit{minimal stick number}. However, one relaxes this to more general set of restrictions and also call them \textit{minimal stick number} (for those restrictions).

Our first result goes in this direction. We prove
\begin{lthm}\label{lthm: minimal is 42}
  In the Coxeter Tesselation $\tB$ the minimal stick number is at most $40$.  
\end{lthm}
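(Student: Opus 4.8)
The plan is to establish the bound by exhibiting one explicit closed gallery in the $\tB$ tessellation whose associated polygonal curve is a nontrivial knot assembled from at most $40$ sticks. Since the statement is only an upper bound, there is nothing to optimize: the whole task is to build a single good example and then to check that the resulting curve is (i) embedded and (ii) genuinely knotted. Note also that, because the paper goes on to show the smallest threefold rotationally symmetric trefoil has length $42$, one should \emph{not} insist on a symmetric example here — breaking the symmetry is precisely what should let the count drop below $42$.

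First I would fix a concrete coordinate model of the $\tB$ Coxeter complex: write the fundamental alcove as an explicit simplex cut out by the relevant affine hyperplanes, record the generating reflections of the affine Weyl group, and thereby label each alcove by a word in the generators (equivalently by the image of the barycenter of the fundamental alcove under the corresponding group element). With this dictionary a gallery becomes a finite word, and the associated knot is the polygonal path obtained by joining, in order, a chosen marked point in each alcove — the barycenter, or the midpoint of the panel crossed at each step, whichever the construction uses — so that each segment lies inside one alcove of the gallery. The stick number is then the number of maximal straight subpaths of this polygon, so collinear consecutive steps should be exploited to keep the total at or below $40$.

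Next comes the construction proper. The natural route is to start from a known knotted gallery, say one realizing a trefoil, and then either shorten it by local moves — replacing a detour by a straight shortcut that still stays inside the complex, or merging collinear steps — or, guided by the geometry and symmetry of $\tB$, to search directly for a closed word whose polygon closes up into a trefoil with at most $40$ edges. Once a candidate word is fixed, all marked-point coordinates are determined and one simply writes out the (at most $40$) vertices of the polygon.

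The verification then splits into two finite checks on these explicit coordinates. For embeddedness: check that adjacent segments meet only at their shared endpoint and that no two non-adjacent segments intersect; this is a bounded collection of elementary linear-algebra computations. For knottedness: project the polygon to a generic plane, read off a knot diagram, and compute an invariant that separates it from the unknot — for a trefoil it is enough to compute the Alexander polynomial or the determinant from the diagram, or equivalently to exhibit a Fox $3$-coloring. I expect the main obstacle to be the search/optimization step: arranging simultaneously that such a short gallery closes up, stays embedded, and carries a nontrivial knot type is delicate, and in practice this is where computer assistance — to handle the coordinate bookkeeping, the crossing enumeration, and the invariant computation — does the heavy lifting. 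Once a valid word is in hand, the two verification checks are routine.
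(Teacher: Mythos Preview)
Your overall strategy matches the paper's: the proof is by exhibiting a single explicit closed gallery of length $40$ that knots as a trefoil, found by computer search, with embeddedness and knot type checked on the explicit coordinates. The paper records the word
\[
DABCACDACDBCABCACDBACDCBDCBCABDCACABDCBC
\]
and a picture of the resulting polygon sitting inside four cubes of the underlying cubic lattice.

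There is, however, one point where your reading diverges from the paper's conventions and could send the search in the wrong direction. In this paper the ``minimal stick number'' in $\tB$ is \emph{not} the number of maximal straight subpaths of the polygon; it is simply the length of the gallery, i.e., the number of letters in the word (equivalently, the number of segments joining consecutive pyramid centers, regardless of whether some consecutive ones happen to be collinear). So the remark that ``collinear consecutive steps should be exploited to keep the total at or below $40$'' is off the mark: no merging is allowed, and what one must actually produce is a closed word of length $40$ in the generators $A,B,C,D$ whose associated polygon is embedded and knotted. Apart from this definitional slip, your plan is exactly the paper's plan.
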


This result is one in a long family of results of this kind. The classical result of Diao states that in the cubic lattice, where we require that contiguous sticks glue at centers of adjacent cubes, has minimal stick number equal to $24$ (see \cite{Diao}). Some other minimal stick numbers, for other type of translational tessellations, are given in \cite[Theorem 2.1]{mannminimalknottingnumbers}. There are other type of conditions one can put besides where the endpoints can be. For example, if all sticks must have the same length the minimal stick number is $6$ (see \cite{Equilateralknots} and the images in \cite{Equilateralknotsimages}). 

The Coxeter tessellation $\tB$ is a refinement of the standard cubic tessellation of Euclidean space. One further divides each cube into $24$ pyramids by joining the center to each of the vertices and each center of the faces. We require that the polygonal paths have endpoints at contiguous centers of these pyramids. We proved theorem \ref{lthm: minimal is 42} by an exhaustive process aided by (simple house) computers. More concretely, we produced the list of all possible closed paths until we found a knotted one. We will explain this in section \ref{sec: knotting length in the Coxeter Lattices} below.

We did not pick this tessellation randomly. If we color the four vertices of each pyramid with different colors with the restriction that vertices that are opposite to the same face must have the same color then the group of symmetries of this colored tessellation is an affine Coxeter group of type $\tB$. In particular, paths within pyramids can be codified by words in the generators of this group. Thus, instead of codifying sets of vectors that represent consecutive vertices, we wrote words in the generators and studied them with an interactive program that we coded. 
We will review Coxeter groups in section \ref{sec: Irreducible Coxeter Complexes of rank 4} below.

In order to motivate the following set of results, we explain what lead us to them. The lists that we had to produce of words of increasing length to be checked became very large. For the longest time we could not produce a trefoil. Thus, we decided to \textit{convert} a cubic trefoil into pyramids. We chose one of minimal length (i.e. $24$ cubes). However, we picked a cubic trefoil that has threefold rotational symmetry, that is, it is built of three copies of the same piece. Each piece is the previous one rotated by $120$ degrees around certain axis. This led to a path of pyramids that also has threefold rotational symmetry and length $48$. 

However, in $\tB$, this rotation is a symmetry of the colored tessellation and thus it is a word in the generators. This word has dual purposes: on the one hand it codifies the \say{piece} to be repeated and, on the other hand, it is an element of the Coxeter group of order $3$. Hence, instead of looking for just increasingly long words, we searched for words of order three and of length at most $14$. This was on the limit of what we could handle in maneagable time with our computers. This motivated us to prove
\begin{lthm}\label{lthm: order 3 iff order 3}
   A knot $K$ has threefold rotational symmetry if and only if it can be represented by a word of order $3$ in $\tB$. 
\end{lthm}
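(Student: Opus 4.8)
The plan is to translate the geometric condition into an algebraic one through the standard dictionary between galleries and words, and then to verify the equivalence in each direction. Fix a base chamber $C_0$ of the Coxeter complex of $\tB$ and identify it with the identity of $W$; the chambers (the tetrahedra of the tessellation) then correspond $W$-equivariantly to the elements of $W$, and recall that $W$ acts on the complex by \emph{type-preserving} isometries and \emph{simply transitively} on chambers, so that a chamber is recovered from its barycentre. A based gallery $C_0, C_1, \dots, C_n$ in which $C_j$ and $C_{j+1}$ share a panel of type $s_{i_{j+1}} \in S$ is recorded by the word $w = s_{i_1} \cdots s_{i_n}$; then $C_j = s_{i_1} \cdots s_{i_j}$, the knot carried by the gallery is the closed polygon through the barycentres $b(C_0), \dots, b(C_{n-1})$, and the gallery returns to its start — so that this polygon is genuinely closed — exactly when $[w] = e$ in $W$. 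I would isolate two facts. First, left multiplication by any $g \in W$ is a type-preserving automorphism of the complex, hence it carries the gallery of type $w$ starting at $C$ to the gallery of the \emph{same} type $w$ starting at $gC$. Second, an element of $W$ has order $3$ if and only if it is a rotation by $120^\circ$ about an axis of the tessellation: a finite-order isometry of $\mathbb{R}^3$ fixes a point (average a group orbit), so its linear part is an order-$3$ element of $O(3)$ and therefore lies in $SO(3)$ as a $120^\circ$ rotation, and conversely such a rotation that lies in $W$ has order $3$. It will also be convenient to note that the full isometry group $G$ of the (uncoloured) tessellation — the cubic space group — contains $W = \tB$ with index $2$, the nontrivial coset being the colour-swap realised by a unit translation (equivalently, the one diagram automorphism of $\tB$, the transposition of the two prongs of its fork); since $\gcd(3,2) = 1$, every order-$3$ element of $G$ is automatically colour-preserving and lies in $W$.

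With these in hand, the implication ``represented by an order-$3$ word $\Rightarrow$ threefold symmetry'' is immediate. If $K$ is carried by the closed gallery whose type is the threefold concatenation $uuu$ of a word $u$ with $\rho := [u]$ of order $3$ in $W$, and $m = \ell(u)$, then $C_m = \rho\, C_0$, so the block $C_m, \dots, C_{2m}$ is the $\rho$-image of $C_0, \dots, C_m$ and $C_{2m}, \dots, C_{3m}$ is its $\rho^2$-image, with $C_{3m} = \rho^3 C_0 = C_0$. Thus $\rho$, a $120^\circ$ rotation by the classification above, sends $b(C_j)$ to $b(C_{j+m})$ for all $j$, and so maps the polygon $K$ onto itself by a cyclic shift of $m$ vertices: $K$ has threefold rotational symmetry.

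The substance is the converse. Given a $120^\circ$ rotation $\rho$ with $\rho(K) = K$, the key step is to show $\rho \in W$; once this is known the conclusion follows mechanically. Indeed $\rho$ permutes the vertices of the polygon $K$; outside the degenerate case in which the rotation axis meets the vertex set (handled separately) this action is free, so the number of vertices is $3m$ and, after a cyclic relabelling, $\rho(b(C_j)) = b(C_{j+m})$ for all $j$; since the barycentre map is isometry-equivariant and a chamber is determined by its barycentre, this gives $\rho C_j = C_{j+m}$. Because $\rho \in W$ is type-preserving, the block $C_m, \dots, C_{2m}$ — being the $\rho$-image of $C_0, \dots, C_m$ — has the same type $u$ as that block, and likewise the last block; hence the gallery has type $uuu$ with $[u] = \rho$ of order $3$, as required.

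It remains to argue $\rho \in W$, which I expect to be the main obstacle. The plan is a rigidity argument. The rotation $\rho$ permutes the edge vectors of $K$, and every edge of the polygon is a difference $b(C_{j+1}) - b(C_j)$ of barycentres of adjacent chambers, which by equivariance of the barycentre map equals $\vec{C_j}(\delta_s)$, where $\delta_s := s\, b(C_0) - b(C_0)$ is the (fixed) normal to the type-$s$ wall of $C_0$ and $\vec{C_j}$ is the linear part of the chamber $C_j$. As $C_j$ ranges over $W$ its linear part ranges over the point group $W(B_3) = O_h$, so the collection of all possible edge vectors is the union of at most four $O_h$-orbits, each a rescaled set of vertices of an octahedron or of a cuboctahedron, a configuration whose group of linear symmetries is $O_h$. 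Since $K$ is a nontrivial knot its edge vectors span $\mathbb{R}^3$, and one checks that a $120^\circ$ rotation permuting a spanning subset of such a configuration must have its axis along a body-diagonal of the cube and hence lie in $O_h$; combined with the fact that $\rho$ carries the barycentre $b(C_0)$ to another barycentre, this forces the translational part of $\rho$ into the cubic lattice, so $\rho \in G$, and then $\rho \in W$ by the coprimality observation above. The remaining loose ends — the precise bookkeeping with panels and types, and the two degenerate configurations (axis through a vertex; three consecutive barycentres collinear, so that ``vertices of $K$'' must be read as the cyclic sequence of edge directions) — are routine and I would dispatch them once the rigidity lemma is in place.
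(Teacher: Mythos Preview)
Your forward direction is fine and matches the paper's. The converse, however, has a genuine gap at the very start: you tacitly assume that the threefold-symmetric knot $K$ is \emph{already} realised as a gallery polygon, i.e.\ as a closed path through chamber barycentres, and then spend your effort showing that the rotation $\rho$ preserving this particular polygon lies in $W$. But the hypothesis ``$K$ has threefold rotational symmetry'' is a statement about the knot \emph{type}: it says only that some smooth (or tame) embedding representing $K$ is invariant under a $120^\circ$ rotation. There is no reason that embedding is a gallery, and an arbitrary gallery representative of the same knot type will typically have no symmetry whatsoever---so your rigidity argument never gets off the ground. What you would actually prove, at best, is the much weaker statement that a gallery which \emph{happens} to possess a $120^\circ$ symmetry is encoded by an order-$3$ word.

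The paper's proof confronts exactly this point, and its content is an \emph{equivariant approximation} rather than a rigidity lemma. One first aligns the rotation axis of $K$ with a threefold axis of the $\tB$ tessellation and shrinks the scale of the pyramids until every simplex meets $K$ in a connected, unknotted arc. After a small $\rho$-equivariant perturbation pushing $K$ off the vertices and edges, one replaces $K\cap P$ in each pyramid $P$ by the broken segment through the centre of the entry face, the centre of $P$, and the centre of the exit face---performing this simultaneously on each $\rho$-orbit of pyramids. The output is a gallery that is $\rho$-invariant by construction, with $\rho\in W$ guaranteed by the initial alignment; no rigidity is needed. Incidentally, even granting your framing, the rigidity sketch has its own soft spots: that a $120^\circ$ rotation permuting a \emph{spanning subset} of a union of $O_h$-orbits must have a body-diagonal axis is not obvious, and ``$\rho$ sends one barycentre to another'' together with ``linear part in $O_h$'' does not by itself force the translational part into the lattice of $G$, since differences of chamber barycentres are not in general lattice vectors.
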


Of course, one now has to address the potential issue that the smallest knotting number might not be achieved by a knot with threefold symmetry. For example, in \cite[Theorem 2.1]{mannminimalknottingnumbers} it is proven that the minimum sticking number of the simple hexagonal lattice (sh) is $20$. This lattice admits rotational threefold symmetry and there are trefoils with threefold symmetry in it. However, the minimum cannot be achieved by such a trefoil because $20$ is not a multiple of $3$ and thus it cannot be built out of three equal pieces. 

Another example concerns the cubic lattice. We have mentioned the minimal stick number for it is $24$. Instead, one can require that the endpoints of sticks are in the lattice points and edges parallel to the axis. The minimal stick number of the latter, sometimes referred as the \textit{lattice} stick number, is $12$ (see \cite{huhohlatticestick}). We thus see that both possibilities happen.

As we have mentioned, Diao proved the former result and gave an example of a minimal trefoil (see \cite[Figure 1]{Diao}). It consists of $24$ contiguous unit sticks, which in turn form $13$ different \say{bars}. Consecutive bars are orthogonal and all of them are parallel to the axis. Thus, this is not a symmetric trefoil nor an example of a minimal one for the lattice stick number. However, in \cite[Figure 1 (a)]{huhohlatticestick} another trefoil is presented. This is rotationally symmetric, built out of three equal pieces. Moreover, it is a minima for both the stick number and the lattice stick number.

We know that as the number of sticks increases, without further restriction, the families of knots separate into equivalence classes that are finer than those of usual knot equivalence (see \cite{calvogeometricknot}). When we put conditions several equivalence classes disappear because they do not have representatives that satisfy those conditions. They get sieved away. Thus, the minima of stick numbers with respect to these conditions redistribute. Components where the minima lie, don't necessarily have symmetric representatives. On other cases, minima are of both types, symmetric or non-symmetric. For instance, it easy to see that the minima shown in \cite{Diao} and \cite{huhohlatticestick} lie in the same component. One might wonder about some restrictions such that certain components, of knots built out of the same number of sticks,  have symmetric minima while others don't. We do not have an example of one.

The above discussion is given to motivate making the \say{symmetric pieces and their construction} part of the object of study, as opposed to something that happens as a consequence of simplifying searches. We reached to this need of symmetry also to simplify our search but it made us focus strongly on the construction of the pieces themselves.

This motivates us to give the following definitions in section \ref{sec: knotting length in the Coxeter Lattices}.
\begin{ldef}
Let $K$ be a knot that admits threefold rotational symmetry. The minimal number such that there exists a threefold rotational gallery in $\tB$, of such length, that knots as $K$ is called the \textbf{$3$-minimal stick number} of $K$ in $\tB$. We will denote it by $l_3(K)$ or $l_{3, \tB}(K)$.

The minimal number such that there exists a rotationally symmetric gallery that knots in nontrivial way is called the $3$-\textbf{minimal stick number} of $\tB$. We denote it by $l_3(\tB)$.

\end{ldef}
We thus proved
\begin{lthm}\label{lthm: Sym minimal is 8}
    The $3-$rotational stick number of $\tB$ is $42$ and is achieved by a symmetric trefoil. Thus $l_3(3_1) = 14$.
\end{lthm}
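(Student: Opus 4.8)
The plan is to establish the two halves of the claimed value $l_3(\tB)=42$ separately: a lower bound showing no rotationally symmetric gallery of length $<42$ can knot nontrivially, and an explicit construction realizing $42$ with a trefoil. By Theorem~\ref{lthm: order 3 iff order 3}, a knot has threefold rotational symmetry precisely when it is represented by a word of order $3$ in $\tB$; so the search space is exactly words $w$ of order $3$, and the gallery they generate has length $3|w|$ (the word codifies the repeated ``piece,'' applied three times by the rotation). Hence $l_3(\tB)$ is always a multiple of $3$, and proving the theorem amounts to showing: (i) every order-$3$ word of length at most $13$ yields either an unknot or a self-intersecting path, and (ii) some order-$3$ word of length $14$ yields a trefoil. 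This immediately gives $l_3(3_1)=14$ in the normalization where we count the length of the generating piece rather than the full gallery.

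First I would set up the computational framework from Section~\ref{sec: knotting length in the Coxeter Lattices}: enumerate conjugacy-class representatives (or simply all reduced words, modulo obvious symmetries) of elements of order $3$ in the affine Coxeter group of type $\tB$, organized by length. For each candidate word $w$ one builds the associated gallery of pyramids, checks that the resulting closed polygonal path is embedded (no self-intersections, which is where many short candidates die), and then computes a knot invariant—most practically the Alexander or Jones polynomial, or a reduction to a known knot table via a diagram—to decide triviality. Carrying this out for all $|w|\le 13$ and finding only unknots establishes the lower bound $l_3(\tB)\ge 42$. The honest statement here is that this is an exhaustive finite check; the mathematical content is that the order-$3$ constraint (via Theorem~\ref{lthm: order 3 iff order 3}) makes the length-$14$ search tractable where the unrestricted search in Theorem~\ref{lthm: minimal is 42} was not.

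For the upper bound I would exhibit one explicit order-$3$ word $w$ of length $14$ whose gallery, of total length $42$, is a trefoil. The natural route, already flagged in the motivation, is to start from Diao's $24$-cube trefoil chosen with threefold rotational symmetry, ``convert'' it into pyramids to get a symmetric gallery of length $48$, and then shorten the repeated piece from length $16$ down to $14$ by local moves that preserve both the knot type and the order-$3$ symmetry (equivalently, replacing the length-$16$ word by a shorter order-$3$ word representing an isotopic gallery). One then verifies directly that the short gallery is embedded and computes its knot type to confirm it is $3_1$. The main obstacle I expect is precisely this last optimization step: it is easy to produce \emph{some} symmetric trefoil gallery, but proving that length $14$ is attainable—and, in tandem with part (i), that nothing shorter works—requires either a clean geometric shortening argument or a carefully organized exhaustive enumeration, and reconciling the two into a rigorous ``exactly $42$'' statement is the delicate part. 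Everything else (the reduction to order-$3$ words, the counting of length, the conclusion $l_3(3_1)=14$) is then formal.
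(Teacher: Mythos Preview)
Your proposal is essentially the paper's approach: both rest on an exhaustive computer search over order-$3$ words in $\tB$ up to length $14$, discarding self-intersecting or non-closing candidates and checking knot type on the survivors. The only real difference is in how you obtain the upper bound. You propose to start from the length-$48$ pyramid conversion of a symmetric cubic trefoil and then \emph{shorten} the piece from $16$ to $14$ by local symmetry-preserving moves; the paper instead gets the length-$14$ trefoil directly as output of the same exhaustive enumeration used for the lower bound, exhibiting the explicit word $CBDCDCDCBADCBA$. Your route would work in principle but is unnecessarily delicate (as you yourself flag), since once the full length-$\le 14$ list is built the minimizer is already in hand and no separate shortening argument is needed. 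The paper also records two practical pruning criteria you did not mention: the piece must contain an even number of $D$'s (since $D$ is the only generator that changes cube and the gallery must close), and words are reduced via the Coxeter relations and Tits M-moves before being tested.
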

In particular, we realize that $\tB$ does not have threefold symmetric minimums. One can define analogous numbers for other symmetries or other lattices. For example, for the cubic lattice or for the (sh) one we denote them by $l_3(C)$ and $l_3(\mbox{sh})$. The results above show
\begin{equation*}
    l_3(C) = 8,
\end{equation*}
because there are symmetric minimums. For (sh) we will justify in section \ref{sec: questions}
\begin{equation*}
    8\le l_3(\mbox{sh})\le 12.
\end{equation*}
We do not know what the exact number is. 

With this in mind, we shift our attention to bounding $l_3(K)$ for the next prime knots that have threefold symmetry. These are $9_{35}, 9_{40}, 9_{41}$ and $9_{47}$. We cannot expect to find by exhaustive search words that represent their \say{piece} to be repeated. Instead we will construct a cubic piece that, when rotated $120$ and $240$ degrees around certain axis, produces the sought knot. 

The method to proceed is as follows. We first find the cubic piece that gives this knot in the cubic lattice and then we convert it to $\tB$. We will discuss this translation process in section \ref{sec: translation from the cubes to the pyramids}. However, because the cubic lattice is not a Coxeter lattice, the codification of paths has to be done with care to have an appropriate substitute of \say{order 3 words}.

Thus, the problem gets reduced to finding the cubic pieces for knots that have threefold symmetry. As is well known, two problems can occur. On the one hand a knot diagram might not show the symmetries that the physical knot might admit or, on the other hand, a knot diagram might show the symmetry, but it is not easily translatable to a construction of this knot that has this symmetry. 

We noticed that in all the cases we tried we could do an inductive constructive process. It begins with analyzing a knot diagram that has threefold symmetry. Given such a knot diagram, we aim at constructing the \say{cubic} pieces of progressively smaller regions. These regions are either repeated three times, due to the rotational symmetry, or surround the center of rotation. Ultimately, we have to be able to construct the smallest pieces that do not surround any other smaller one. In all the cases we tried, this is possible. 

The question of whether a knot with threefold symmetry (or more general rotational symmetries) have a diagram that shows it is well studied (see \cite{COSTA} and the references therein). Indeed it is proven that such a diagram exists for alternating knots with rotational symmetries. These diagrams are not necessarily compatible with given lattices or similar restraints. It is interesting to know if one can find such diagrams that are always adaptable to constructing the pieces from them.

For example, for $12_{503}$, we see in \cite[Example 5, Figure 13(b)]{COSTA} a diagram with threefold symmetry. Using it we design a piece
\begin{center}
    \includegraphics[scale = 0.3]{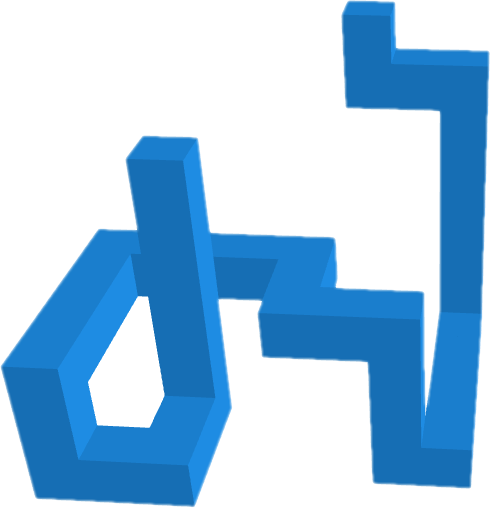}
    \captionof{figure}{The piece for $12_{503}.$}
    \label{fig: The piece for 12_{503}.}
\end{center}
Using three copies of this figure we construct $12_{503}$ in the cubic lattice and with threefold rotational symmetry. 
\begin{center}
    \includegraphics[scale = 0.3]{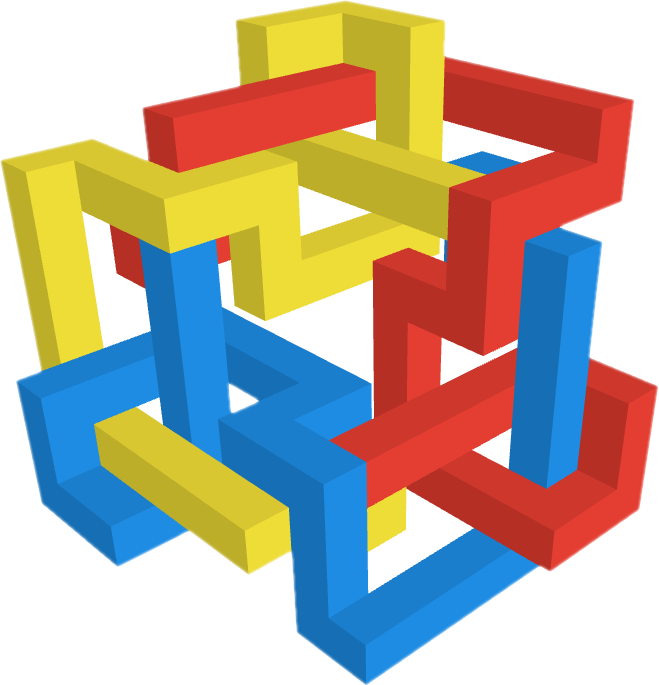}
    \captionof{figure}{The knot $12_{503}$ built out of three pieces rotated by $120$ degrees}
    \label{fig: The knot 12_{503} built out of three pieces rotated by 120 degrees}
\end{center}
Even though we are showing the piece as a single connected path of cubes, its constructions is by parts. We append the required prolongations, to preserve the rotational symmetry, to the central figure we referred above. 

In section \ref{sec: Order three galleries} we will explain this carefully for $9_{47}$. We also give the analogous of figures \ref{fig: The piece for 12_{503}.} and \ref{fig: The knot 12_{503} built out of three pieces rotated by 120 degrees} above for $9_{35}, 9_{40}, 9_{41}$. This leads to Theorem \ref{thm: words for order three for cross 9} and its proof, which give the cubic words and Coxeter ones for these knots, respectively. In this way we get strict bound for $l_3(9_i)$ for $i = 35, 40, 41, 47$. We do not know the exact values.

For $9_{47}$ it is known that its stick number is $50$ (see \cite[Table 1]{rensburgjanseminimalknots}). Thus, it will not have a symmetric minimum in the cubic lattice. The piece we produce has $46$ cubes in it. Thus
\begin{equation*}
    18\le l_{3, \mathbb{Z}^3}(9_{47}) < 46.
\end{equation*}

Finally, in section \ref{sec: questions} we put some questions that were motivated by this work and that the authors believe are interesting to study.

The knot diagrams of $9_{35}, 9_{40}, 9_{41}$ and $9_{47}$, as well as an image of a rhombic dodecahedra below, were taken from their wikipedia articles. The cubic galleries and pieces, as those in figures \ref{fig: The knot 12_{503} built out of three pieces rotated by 120 degrees} and \ref{fig: The piece for 12_{503}.} and similar, were produced with Voxel Paint app. The rest of the figures were created with our aforementioned program or drawn by the authors.

\section{Irreducible Coxeter Complexes of rank $4$}\label{sec: Irreducible Coxeter Complexes of rank 4}

There are three irreducible affine Coxeter groups of rank $4$. They are denoted by $\tA, \tB$ and $\tC$. Each of them is associated to a tesselation into pyramids of $\mathbb{R}^3$. We will describe them here and explain how we use them. However, that the results we are using hold won't be proved for the sake of space. We refer the reader to \cite[Chapter 2, 3]{BUILDINGS} for proofs of all of these facts.

We encourage the reader familiar with the theory of Coxeter groups to move on onto section $3$. We highlight here for their sake that the above groups have the presentations
\begin{small}
    \begin{align*}
    \tA &= \langle A, B, C, D \mid A^2 = B^2 = C^2 = D^2 =(AB)^4 = (AC)^2 = (AD)^2 =(BC)^3 = (BD)^2 = (CD)^4 = 1 \rangle,\\
    \tB &= \langle A, B, C, D \mid A^2 = B^2 = C^2 = D^2 = (AB)^2 = (AC)^3 = (AD)^3 =(BC)^3 = (BD)^2 = (CD)^4 = 1 \rangle,\\
    \tC &= \langle A, B, C, D \mid A^2 = B^2 = C^2 = D^2 =(AB)^2 = (AC)^3= (AD)^3 =(BC)^3 = (BD)^3 = (CD)^2 = 1 \rangle.
\end{align*}
\end{small}

Finally, we follow the next
\begin{convention}
    When we describe general statements that apply to the three constructions above we will write $\tX$.
\end{convention}

\subsection{Construction of the tesselation and galleries}\label{subsec: construction of tess and gal}

Each of the aforementioned tesselation is constructed in similar way. We begin with a tesselation of $\mathbb{R}^3$ by larger polyhedra that, in turn, get divided into these pyramids. To each one of the vertices of these pyramids we give a tag, frequently called \textit{color} or \textit{type}, in such a way that vertices that are opposite to a given face have the same color.  We now describe each one separately starting from the simpler one.

\begin{description}
    \item[$\tB$:] We begin with the standard tesselation of cubes of euclidean three dimensional space. We divide each one of these cubes into $24$ pyramids as follows: we join the center of the cube with each one of the eight vertices and each of the six centers of each face. These are the edges of the $24$ pyramids. We do this on each cube and obtain the $\tB$ tesselation.

    The \say{coloring} goes as follows: $D$ is the center of the cube. $C$ is the color of the center of the faces. Finally, $A$ and $B$ are the colors of the vertices in alternating fashion.
    
    \item[$\tC$:] This one also begins with the tesselation of cubes of euclidean space. We divide each one of these cubes into $48$ pyramids as follows: we join the center of the cube with each one of the eight vertices, each of the six centers of each face and each of the twelve middle points of the sides. These are the edges of the $48$ pyramids. Each pyramid has as vertices the center of the cube, a middle point of an edge, a center of a face and a vertex. We do this on each cube and obtain the $\tC$ tesselation.

    The \say{coloring} goes as follows: $D$ is the center of the cube. $C$ is the color of the center of the faces. $B$ is the color of the midpoints of the edges. Finally, $A$ is the color of the vertices.

    \item[$\tA$:] This one begins with a tesselation of the euclidean space into \textit{rhombic dodecahedra.} The image of such a figure is as follows
    \begin{center}
    \includegraphics[scale = 0.15]{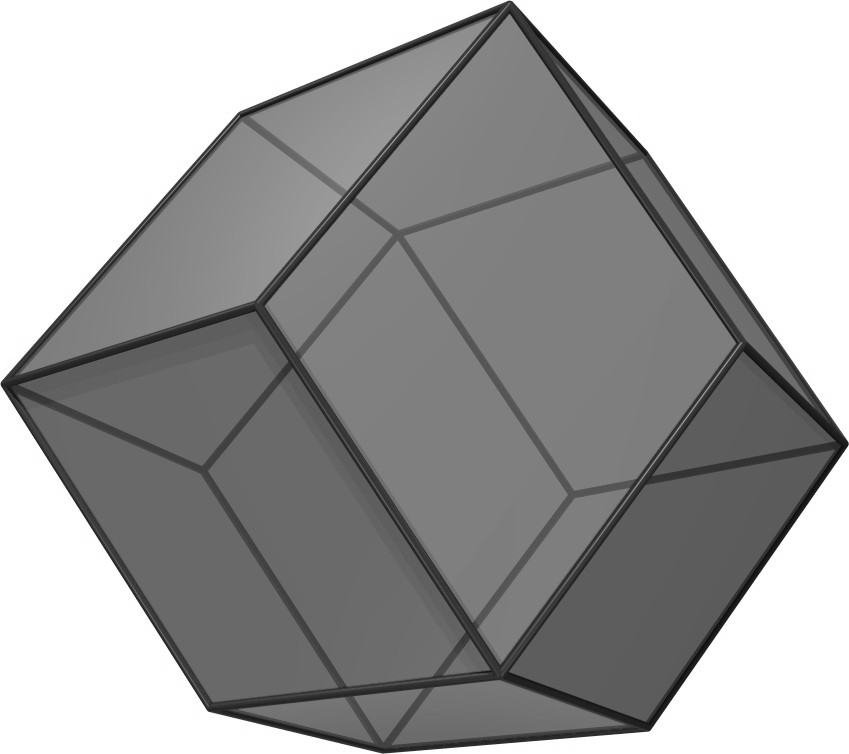}
    \captionof{figure}{An image of a rhombic dodecahedron taken from the wikipedia article}
\end{center}
    To visualize the construction, imagine one of the cubes of the classical tesselation and glue to each one of the six sides the square pyramid obtained by joining the vertices of a face to the center of the adjacent cube along this face. These are polyhedra of $12$ sides, because faces adjacent along a face of the cube are coplanar.
    
    We now join the center of the cube with each one of the $24$ vertices, each of the $12$ centers of each face and each of the twelve middle points of the sides. These are the edges of the $48$ pyramids. Each pyramid has as vertices the center of the cube, a middle point of an edge, a center of a face and a vertex. We do this on each cube and obtain the $\tC$ tesselation.

    The \say{coloring} goes as follows: $D$ is the center of the rhombic dodecahedra. $C$ is the color of the vertices that are also centers of cubes in the cubic lattice (i.e. the extra corners of the pyramids). Finally $A$ and $B$ are the vertices remaining in alternating fashion (i.e vertices that were vertices of the original cube).
\end{description}

We emphasize that $D$ is \textit{always} the center of the \textit{original} polyhedron being subdivided. An important difference is that in $\tA$ and $\tC$ all the polyhedra with the coloring that comes from the above descriptions are translations of each other. However, in $\tB$ this is not the case: there are different colorations, related by one being a reflection of the other, that appear in alternating pattern.

The following is the fundamental definition in which everything will be based.

\begin{definition}\label{def: gallery}
    A  \textbf{gallery} in $\tX$ is a path of pyramids such that consecutive pyramids share a face.
\end{definition}

The following is an important proposition
\begin{proposition}\label{prop: noone fix C}
    The only congruent transformation that can fix a pyramid of $\tX$ is the identity.
\end{proposition}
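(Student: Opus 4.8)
The plan is to read ``congruent transformation'' as an isometry of $\mathbb{R}^3$ and ``fix a pyramid $P$'' as mapping $P$ onto itself (the pointwise version being immediate, since a pyramid has nonempty interior and hence contains four affinely independent points). The strategy is then to show, in two steps, that any such $g$ already fixes \emph{each vertex} of $P$, and then to conclude by the elementary fact that an isometry of $\mathbb{R}^3$ fixing four affinely independent points is the identity. The first step is purely combinatorial: since the four vertices of $P$ are exactly its extreme points, $g$ restricts to a permutation of them.

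The substantive input is the coloring described in Subsection~\ref{subsec: construction of tess and gal}. First I would record that in each of $\tA$, $\tB$, $\tC$ the four vertices of \emph{any} pyramid carry the four pairwise distinct colors $A,B,C,D$: the apex $D$ is the center of the subdivided polyhedron, $C$ is a face center, and the two remaining vertices lie in the two different ``alternating'' classes, hence are colored $A$ and $B$ respectively. Since $g$ is a symmetry of the \emph{colored} tessellation it preserves colors, so the permutation it induces on the vertices of $P$ must preserve colors; as the four colors are distinct, that permutation is trivial and $g$ fixes each vertex. Finally, each pyramid is a nondegenerate $3$-simplex — it has positive volume, being one of the congruent pieces into which a polyhedron of positive volume is cut — so its four vertices are affinely independent and affinely span $\mathbb{R}^3$; an isometry fixing all four is therefore the identity, and $g=\mathrm{id}$.

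The two points needing care are exactly (i) that on every pyramid the four colors are pairwise distinct, which is read off directly from the constructions in Section~\ref{sec: Irreducible Coxeter Complexes of rank 4}, and (ii) that the pyramids are nondegenerate, which is clear from the subdivision (or, if desired, from a one-line determinant computation on explicit coordinates of the four vertices of one pyramid of each type). The main conceptual obstacle is that the hypothesis that $g$ respects colors is genuinely used: without it the statement fails, since a single pyramid of $\tX$ can admit a nontrivial self-congruence — for instance, in $\tB$ the reflection of a cube that fixes its center and a face center while interchanging two of its vertices stabilizes one pyramid but does not preserve the coloring (nor the alternating pattern of colorings across adjacent cubes), so it is excluded. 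Alternatively, one may obtain the proposition as a special case of the simple transitivity of the affine Coxeter group on the chambers of its Coxeter complex \cite[Chapters 2, 3]{BUILDINGS}, together with the identification of that group with the symmetry group of the colored tessellation; the argument above has the advantage of being self-contained.
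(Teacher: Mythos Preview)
Your proof is correct and follows essentially the same approach as the paper's: both arguments use the coloring to conclude that each individual vertex of the pyramid is fixed, and then appeal to the affine independence of the four vertices (equivalently, the linear independence of the three edge vectors emanating from one vertex) to deduce that the isometry is the identity. Your write-up is more careful in spelling out why the vertex permutation must be trivial and in noting explicitly that the color-preserving hypothesis is essential, but the underlying idea is identical to the paper's terse version.
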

\begin{proof}
    To see this put the origin in one of the vertices and the three edges are linearly independent. Thus, the only transformation that fixes them is the identity. Notice we used the coloration to claim each of the vectors is fixed, otherwise we could swap \textit{similar} vertices.
\end{proof}

Now we progressively explain how we codify these galleries and the geometry associated to them. To describe a gallery we need two pieces of information: (1) the starting pyramid and (2) the sequence of faces we are crossing. If we suppose we are given the starting pyramid, which we call $\cC$ (in the language of Coxeter complexes and Buildings, these are called \textit{chambers}, hence the $\cC$), then we codify the path as follows: \textit{we write from \textbf{left to right} the color of the vertex \textbf{behind} the face we are crossing.}

Given our initial pyramid, the four adjacent pyramids are associated to each one of the vertices $A, B, C$ and $D$. However, more is true: each of these faces is contained in a plane and reflecting across any one of these planes \textbf{leaves the \textit{colored} tesselation} invariant. In other words, reflection on the planes of the faces of any given pyramid is a \textbf{symmetry} of the colored tesselation. We will from now on just say tesselation to mean \say{colored tesselation} unless otherwise stated. These symmetries, with respect to $\mathcal{C}$, are also denoted by $A, B, C$ and $D$. We emphasize that the reflections we are denoting by these letters depend on the initial pyramid.

The following result is what makes this construction very useful.

\begin{theorem}
    Let $\cC$ be any initial pyramid of the teselation corresponding to $\tX$ and $A, B, C, D$ the reflections described above. Then every symmetry (i.e. congruent transformation of euclidean space) that leaves the tesselation invariant is a composition of $A, B, C$ and $D$.
\end{theorem}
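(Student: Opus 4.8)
The plan is to show that the reflections $A, B, C, D$ generate a group $W$ acting on the tessellation with a single orbit of pyramids, and then conclude that this forces $W$ to be the full symmetry group. First I would argue transitivity on pyramids (chambers): starting from $\cC$, the four reflections move $\cC$ to its four face-neighbours, and — crucially — because each reflection is a global symmetry of the colored tessellation, conjugating the "neighbour-generating" reflections of $\cC$ by an element $w \in W$ produces the corresponding reflections attached to the pyramid $w\cC$. Hence if $w\cC$ is reached, so are all of its neighbours. Since the tessellation is connected via the gallery relation (Definition \ref{def: gallery}), an induction on gallery distance from $\cC$ shows every pyramid is of the form $w\cC$ for some $w \in W$. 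This is the technical heart of the argument, and the main obstacle is making the conjugation step precise: one must check that if $s$ is the reflection in the plane of a face $F$ of $\cC$, and $g$ is any symmetry of the tessellation, then $gsg^{-1}$ is the reflection in the plane $g(F)$, which is a face of $g(\cC)$; this is a general fact about conjugating reflections by isometries, but it must be invoked carefully so that the colored structure is respected.

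Next I would take an arbitrary symmetry $\sigma$ of the tessellation and show $\sigma \in W$. Apply $\sigma$ to the base pyramid $\cC$: the image $\sigma(\cC)$ is again a pyramid of the tessellation, so by transitivity there is $w \in W$ with $w(\cC) = \sigma(\cC)$. Then $w^{-1}\sigma$ is a symmetry of the tessellation fixing $\cC$. By Proposition \ref{prop: noone fix C}, the only such symmetry is the identity, so $w^{-1}\sigma = \mathrm{id}$, i.e. $\sigma = w \in W$. This second half is short once transitivity is in hand, and Proposition \ref{prop: noone fix C} is exactly the ingredient that upgrades "transitive" to "simply transitive," which is what closes the gap.

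A subtlety worth flagging for the $\tB$ case specifically: the remark just before Definition \ref{def: gallery} notes that in $\tB$ the subdivided cubes carry colorations in two mirror-image patterns arranged in an alternating fashion. One should make sure the induction on gallery distance genuinely reaches pyramids in cubes of \emph{both} coloration types; this is automatic because crossing a face of a pyramid on the boundary of a cube lands in the adjacent cube, and the reflection realizing that crossing is a symmetry of the whole colored tessellation (so it carries one coloration pattern to the other correctly). No separate argument is needed, but it is the point where a careless reader might worry.

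In summary, the proof is: (1) $A,B,C,D$ generate a group $W$ of symmetries; (2) $W$ acts transitively on pyramids, by induction on gallery distance using the conjugation-of-reflections identity; (3) given an arbitrary symmetry $\sigma$, correct it by an element of $W$ to fix $\cC$, then invoke Proposition \ref{prop: noone fix C} to conclude it was in $W$ all along. I expect step (2), and within it the bookkeeping that conjugates produce the reflections attached to the moved pyramid, to be the only place requiring real care.
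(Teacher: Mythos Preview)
Your argument is correct and is the standard one. The paper does not actually prove this statement in-text; it simply cites \cite[Lemma 3.31, Proposition 3.32]{BUILDINGS}. Your sketch---transitivity of the group $W = \langle A,B,C,D\rangle$ on chambers by induction on gallery distance (using that conjugating a reflection by an isometry yields the reflection in the image hyperplane, so the face-reflections of $w\cC$ lie in $W$ whenever $w\in W$), followed by correcting an arbitrary symmetry by an element of $W$ and invoking Proposition \ref{prop: noone fix C}---is exactly the argument one finds in the cited reference. There is nothing further to compare: you have supplied the proof the paper outsourced.
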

\begin{proof}
    See \cite[Lemma 3.31, Proposition 3.32]{BUILDINGS}.
\end{proof}

Using proposition \ref{prop: noone fix C}, we conclude that two symmetries $\Phi_1$ and $\Phi_2$ with
\begin{equation*}
    \Phi_1(\cC) = \Phi_2(\cC)
\end{equation*}
must coincide. Indeed, $\Phi_2^{-1}\circ \Phi_1$ fixes $\cC$ and thus must be the identity. As a consequence, we get
\begin{proposition}\label{prop: symmetries oneone chambers}
    The symmetries of $\tX$ are in bijective correspondence with the pyramids of the tesselation.
\end{proposition}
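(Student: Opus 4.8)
The plan is to fix once and for all an initial pyramid $\cC$ and to study the map $\Psi$ sending a symmetry $\Phi$ of $\tX$ to the pyramid $\Phi(\cC)$. This is well defined because a congruent transformation of $\mathbb{R}^3$ that preserves the tesselation carries pyramids to pyramids. I would then show that $\Psi$ is both injective and surjective, which is exactly the asserted bijection.

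Injectivity is already recorded in the discussion preceding the statement: if $\Phi_1(\cC)=\Phi_2(\cC)$, then $\Phi_2^{-1}\circ\Phi_1$ fixes $\cC$, so by Proposition \ref{prop: noone fix C} it is the identity and $\Phi_1=\Phi_2$. Nothing further is needed here.

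The substantive point is surjectivity: every pyramid $\fp$ of the tesselation has the form $\Phi(\cC)$ for some symmetry $\Phi$. I would prove this by induction on the gallery distance from $\cC$ to $\fp$, i.e. the least $n$ for which there is a gallery $\cC=\fp_0,\fp_1,\dots,\fp_n=\fp$ with consecutive pyramids sharing a face. (That every pyramid lies at finite gallery distance from $\cC$ is the gallery-connectedness of the tesselation; geometrically, the pyramids cover the connected space $\mathbb{R}^3$ in a locally finite way, so a generic path from a point of $\cC$ to a point of $\fp$ crosses faces transversally and traces out a gallery.) The case $n=0$ is trivial. For the inductive step, let $\fp$ be at distance $n+1$ via a gallery $\cC=\fp_0,\dots,\fp_n,\fp_{n+1}=\fp$; by the inductive hypothesis there is a symmetry $\Phi$ with $\Phi(\cC)=\fp_n$. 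Since $\fp_{n+1}$ shares a face with $\fp_n$, applying $\Phi^{-1}$ (a congruence preserving the tesselation, hence mapping faces to faces) shows that $\Phi^{-1}(\fp_{n+1})$ shares a face with $\cC$; therefore it is one of the four pyramids adjacent to $\cC$, and by the very definition of the reflections attached to $\cC$ we have $\Phi^{-1}(\fp_{n+1})=X(\cC)$ for some $X\in\{A,B,C,D\}$. Consequently $\fp_{n+1}=(\Phi\circ X)(\cC)$, and $\Phi\circ X$ is again a symmetry, which completes the induction.

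I expect the only delicate point to be the appeal to gallery-connectedness of the tesselation; once that is granted, the induction is routine and the remainder is bookkeeping. One could alternatively deduce surjectivity by invoking the Theorem above, that the symmetry group is generated by $A,B,C,D$: the orbit of $\cC$ under the full group then equals its orbit under $\langle A,B,C,D\rangle$, and the same induction identifies that orbit with the set of all pyramids. The two formulations are interchangeable, so I would present whichever sits most comfortably with the surrounding material.
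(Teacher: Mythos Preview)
Your proof is correct. Injectivity is handled exactly as in the paper. For surjectivity, however, the paper takes a much shorter route than you do: it simply asserts that ``it is clear that any two pyramids are related by a congruent transformation,'' relying on the geometric fact that all pyramids of the tesselation are congruent colored tetrahedra (and leaving implicit that such a congruence automatically preserves the entire colored tesselation). Your argument is genuinely different and more constructive: you invoke gallery-connectedness and then build the required symmetry inductively as a word in the generating reflections $A,B,C,D$. What your approach buys is explicitness and rigor---in particular, it makes transparent the correspondence between pyramids and words in the generators that the paper exploits immediately afterward---whereas the paper's one-liner is efficient but asks the reader to supply the verification that a congruence between two single pyramids extends to a symmetry of the whole tesselation.
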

\begin{proof}
    Indeed, we just saw injectivity. However, it is clear that any two pyramids are related by a congruent transformation.
\end{proof}

Given the above result, we find ourselves in the following possible inconsistency. Suppose we have a gallery, starting at $\cC$, say $g = (s_1,..., s_k)$ where $s_i \in\{A, B, C, D\}$. Then
\begin{equation*}
    \Phi = s_1\circ\cdots \circ s_k,
\end{equation*}
is a symmetry of $\tX$ and thus $\Phi(\cC)$ is a pyramid of the tesselation. We want this to be the ending pyramid of the gallery! The reason why the codification we explained above is so useful is because this indeed functions. We have

\begin{proposition}\label{prop: coherent codification}
    In the context of the above discussion, $\Phi(\cC)$ is the ending pyramid of the gallery codified by $g = (s_1,..., s_k)$ that starts at $\cC$. In particular, galleries that begin and end in the same place codify the same symmetry. 
\end{proposition}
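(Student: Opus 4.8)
The plan is to proceed by induction on the length $k$ of the gallery, with the base case $k = 0$ being vacuous (the empty gallery starts and ends at $\cC$, and the empty composition is the identity, which indeed sends $\cC$ to $\cC$). For the inductive step, I would separate the single first reflection from the rest of the word: given $g = (s_1, \dots, s_k)$, write $g' = (s_2, \dots, s_k)$. The key geometric observation is that $g'$ describes, starting from $\cC$, the \emph{same} sequence of face-crossings as $g$ describes starting from the pyramid $s_1(\cC)$ — this is because $s_1$ is a symmetry of the colored tessellation, so it carries the gallery rooted at $\cC$ with instructions $g'$ isometrically onto the gallery rooted at $s_1(\cC)$ with instructions $g'$, and the coloring is preserved so the ``color behind the face we cross'' labels match up. Thus the ending pyramid of $g$ (starting at $\cC$) equals $s_1$ applied to the ending pyramid of $g'$ (starting at $\cC$).

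Now I would invoke the inductive hypothesis on $g'$, which has length $k - 1$: its ending pyramid, starting at $\cC$, is $(s_2 \circ \cdots \circ s_k)(\cC)$. Applying $s_1$ gives $(s_1 \circ s_2 \circ \cdots \circ s_k)(\cC) = \Phi(\cC)$, as desired. The one subtlety to be careful about is the convention for reading the word: the codification writes symbols left to right as we cross faces, and the symmetries compose as $\Phi = s_1 \circ \cdots \circ s_k$; one must check that peeling off $s_1$ on the left (the first crossing) is compatible with the composition order, which it is precisely because $s_1$ is applied \emph{last} in the composition but corresponds to the \emph{first} crossing, and conjugating/translating the remaining instructions by $s_1$ is what realizes this. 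The final sentence of the proposition — that galleries beginning and ending in the same place codify the same symmetry — then follows immediately from Proposition~\ref{prop: symmetries oneone chambers}: if two galleries both start at $\cC$ and both end at the same pyramid, then the two symmetries they codify agree on $\cC$, hence are equal by the bijection between symmetries and pyramids (equivalently, by Proposition~\ref{prop: noone fix C} applied to the composite of one with the inverse of the other).

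The main obstacle is getting the bookkeeping of the ``color behind the face'' exactly right under the action of $s_1$. One needs that when $s_1 = A$ (say) and the next instruction is $B$, the face of $s_1(\cC)$ that the translated gallery crosses really is the face whose opposite vertex has color $B$ — and here the hypothesis that $s_1$ preserves the \emph{colored} tessellation (not merely the tessellation) is essential, together with Proposition~\ref{prop: noone fix C} to guarantee the labeling of the four faces of any pyramid by $\{A, B, C, D\}$ is unambiguous. Once that compatibility is pinned down, the induction is purely formal.
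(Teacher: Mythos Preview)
Your induction is correct and complete. The paper does not actually prove this proposition in-text; it simply cites \cite[Proposition 3.24]{BUILDINGS} and remarks that the reference treats minimal galleries via elementary homotopies, with the general case following by allowing general homotopies. Your argument is therefore genuinely different in presentation: rather than first establishing the result for reduced words and then extending via Tits moves, you give a direct length induction that works uniformly for all galleries, minimal or not. The key step---that $s_1$ carries the gallery $g'$ rooted at $\cC$ to the gallery $g'$ rooted at $s_1(\cC)$ because $s_1$ preserves the coloring---is exactly the right observation, and it sidesteps the homotopy machinery entirely. What the cited approach buys is a tighter connection to the word problem and the Tits M-moves used later in the paper; what your approach buys is self-containment and the avoidance of any appeal to minimality. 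Your derivation of the ``in particular'' clause from Proposition~\ref{prop: symmetries oneone chambers} (equivalently Proposition~\ref{prop: noone fix C}) is also correct and matches how the paper would have you read it.
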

\begin{proof}
    See \cite[Proposition 3.24]{BUILDINGS}. There they do it for minimal galleries with elementary homotopies. It is easy to extend it to nonminimal ones using general homotopies.
\end{proof}

This leads to six particular galleries, each obtained by alternating two given letters (or similarly, crossing faces that rotate around an edge of a pyramid). These galleries return to the starting chamber. For example, for $\tB$ these galleries are
\begin{align*}
    &\;(A, D, A, D),  
    (B,D,B,D),
    (A,B,A,B)\\
    &\;(A,C,A,C,A,C), 
    (B,C,B,C,B,C),
    (C,D,C,D,C,D,C,D) 
\end{align*}
In particular, because the galleries close, we have that the symmetry given by performing one jump across a face and then jumping across a different one have finite order and can be written as
\begin{equation*}
    (XY)^{m_{XY}} = 1, X, Y\in\{A, B, C, D\}, X\neq Y.
\end{equation*}
Of course, we can admit $X = Y$ above and get $X^2 = 1$. With this construction we finally reach the following fundamental result.

\begin{theorem}\label{thm: coxeter presentation}
    The group of symmetries of $\tX$ admits the following \textbf{presentation}
    \begin{equation*}
        \langle A, B, C, D \mid (XY)^{m_{XY} = 1}, X, Y\in\{A, B, C, D\}\rangle
    \end{equation*}
    where $m_{XX} = 1$. 
\end{theorem}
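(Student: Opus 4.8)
The plan is to establish the two inclusions implicit in the statement: first that the listed relations genuinely hold in the symmetry group $W$ of $\tX$, and second that they suffice to present it, i.e.\ that the abstract Coxeter group $\widehat{W}=\langle A,B,C,D\mid (XY)^{m_{XY}}=1\rangle$ surjects onto $W$ with trivial kernel. The relations themselves are essentially free: $X^2=1$ because each $X$ is a reflection, and $(XY)^{m_{XY}}=1$ is exactly the content of the six closed ``two-letter'' galleries exhibited just before the statement (together with Proposition~\ref{prop: coherent codification}, which tells us that a gallery returning to $\cC$ codifies the identity symmetry). So the honest work is the presentation claim.

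First I would record the surjection $\pi\colon\widehat{W}\to W$ sending each generator to the corresponding reflection; this is well defined precisely because the defining relations of $\widehat{W}$ hold in $W$, which is the first paragraph above. Surjectivity is the earlier Theorem stating that every symmetry of $\tX$ is a composition of $A,B,C,D$. For injectivity I would argue that $\pi$ is also injective on the level of the natural actions on chambers: by Proposition~\ref{prop: symmetries oneone chambers} the symmetries of $\tX$ biject with the pyramids, and by Proposition~\ref{prop: coherent codification} the pyramid attached to a word $g=(s_1,\dots,s_k)$ is $s_1\circ\cdots\circ s_k(\cC)$. Hence if $w\in\widehat{W}$ has $\pi(w)=1$, then the gallery spelled by any word representing $w$ begins and ends at $\cC$, so by Proposition~\ref{prop: coherent codification} that word codifies the identity symmetry --- but we must still deduce $w=1$ in $\widehat{W}$, not merely in $W$, and that is the crux.

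The standard way to close this gap, and the step I expect to be the main obstacle, is to invoke the solution to the word problem for Coxeter groups via the geometric/combinatorial action on the Coxeter complex: one shows $\widehat{W}$ acts on the chamber complex of $\tX$ so that the stabilizer of $\cC$ is trivial and the action is transitive, whence $\widehat{W}\cong W$. Concretely, I would cite \cite[Chapter 3]{BUILDINGS}: Tits' theorem that the Coxeter complex built abstractly from the Coxeter matrix $(m_{XY})$ is realized geometrically by $\tX$, so that the abstract group $\widehat{W}$ is already identified with the group generated by the geometric reflections. In other words, the content of Theorem~\ref{thm: coxeter presentation} is really a pointer to the fact that $\tX$ \emph{is} the Coxeter complex of the affine type in question, and the presentation follows from Tits' general theory rather than from any computation special to these three tessellations.

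An alternative, more self-contained route I would at least sketch is the ``deletion condition'' argument: use the fact (from the cited chapters of \cite{BUILDINGS}) that galleries in $\tX$ between fixed endpoints are connected under elementary homotopies through the six relator-cycles, which is exactly the assertion that any two words in $A,B,C,D$ representing the same symmetry differ by the relations $(XY)^{m_{XY}}=1$; this is precisely the statement that those relations present $W$. Either way the proof is short modulo \cite[Chapter~3]{BUILDINGS}, and I would be explicit that the only thing being claimed here is the translation of those buildings-theoretic facts into the concrete pyramidal tessellations $\tA,\tB,\tC$, with the exponents $m_{XY}$ read off from the six closed galleries listed above (and reproduced in the matrices given at the start of Section~\ref{sec: Irreducible Coxeter Complexes of rank 4}).
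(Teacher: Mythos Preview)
Your proposal is correct and aligns with the paper's approach: the paper's proof is simply a citation to \cite[Subsection~3.4.2, Lemma~3.72]{BUILDINGS}, and your sketch unpacks exactly the standard argument recorded there (relations hold by the closed two-letter galleries, surjectivity from the prior theorem, injectivity via the Tits word problem / elementary homotopies). You have supplied more detail than the paper does, but the substance and the ultimate reliance on \cite{BUILDINGS} are the same.
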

\begin{proof}
    See \cite[Subsection 3.4.2, Lemma 3.72]{BUILDINGS}.
\end{proof}
In general, groups that admit these kind of presentations are called Coxeter Groups. All of them admit a tesellation of a complex, called the Coxeter Complex, and where the above results generalize. 

We will not prove the above result. However, we must mention the main technical result of its the proof because we will use it. 

\begin{proposition}
    Two galleries in $\tX$ that begin and end in the same pyramids can be obtained one from the other by performing a sequence of the following rules:
    \begin{description}
        \item[T1:] The addition or removal of two consecutive $X, X$ in any part of the gallery.

        \item[T2:] The exchange of $(X, Y, ...)$ for  $(Y, X...)$, with $X\neq Y$, and with the tuples above of size $m_{XY}$.
    \end{description}
    These rules $T1$ and $T2$ are called \textbf{Tits M-moves}.
\end{proposition}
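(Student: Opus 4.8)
The plan is to convert this statement about galleries into the word problem for the Coxeter group $W$ of symmetries of $\tX$, and then run Tits' solution of that problem. One direction is immediate. If $g$ is a gallery starting at $\cC$ and we perform a $T1$ move, the new gallery takes a detour $C\to C'\to C$ across a single wall (a reflecting plane of the tesselation) and returns, so its endpoints are unchanged; if we perform a $T2$ move we replace one of the two minimal galleries around a rank-$2$ corner by the other, and those have equal endpoints by the closing galleries listed before Theorem~\ref{thm: coxeter presentation}. Equivalently, $T1$ and $T2$ leave unchanged the element $s_1\circ\cdots\circ s_k\in W$ that $g$ codifies, since $X^2=1$ and $\underbrace{XYX\cdots}_{m_{XY}}=\underbrace{YXY\cdots}_{m_{XY}}$ are relations in $W$ by Theorem~\ref{thm: coxeter presentation}. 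Hence $T1,T2$-equivalent galleries share their endpoints.

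For the converse, let the two galleries start at $\cC$ and end at the same pyramid; by Propositions~\ref{prop: coherent codification} and~\ref{prop: symmetries oneone chambers} their words $g,g'$ represent the same element $\Phi\in W$, and we must join $g$ to $g'$ by $T1,T2$ moves. First I would establish the auxiliary fact that any two \emph{minimal} galleries with the same endpoints are joined by $T2$ moves alone (the gallery form of Matsumoto's theorem), by induction on $\ell(\Phi)$, the number of walls separating the endpoints: if two minimal galleries from $\cC$ start with different letters $X\neq Y$, then $\Phi$ has reduced expressions beginning with $X$ and with $Y$, which forces it to have one beginning with the full alternating block $\underbrace{XYX\cdots}_{m_{XY}}$; applying the induction hypothesis to the tails brings both galleries to start with this block (up to one $T2$ inside it), after which one strips the common alternating prefix and inducts on the shorter tail, while if both galleries begin with the same letter one strips it directly. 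Second, I would show that every gallery reduces by $T1,T2$ to a minimal one with the same endpoints: if $g=(s_1,\dots,s_k)$ is not minimal, take the least $j$ with $(s_1,\dots,s_j)$ non-minimal, so that $(s_1,\dots,s_{j-1})$ is minimal and appending $s_j$ does not increase the distance to $\cC$; the Exchange Condition then yields $i<j$ with $s_i(s_{i+1}\cdots s_{j-1})=(s_{i+1}\cdots s_{j-1})s_j$ in $W$, both sides minimal galleries of equal length between the same chambers, so by the auxiliary fact we may rewrite the block $s_i\cdots s_{j-1}$ of $g$ as $s_{i+1}\cdots s_{j-1}s_j$, creating an $s_js_j$ that a $T1$ deletes and shortening $g$ by two; iterating lands on a minimal gallery. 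Applying both steps to $g$ and to $g'$ gives $g\sim g'$.

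The crux is the Exchange Condition, the one point where the geometry of $\tX$ genuinely enters: each reflecting plane is a hyperplane splitting $\mathbb{R}^3$ into two half-spaces with no pyramid straddling it, and a minimal gallery crosses each separating wall exactly once and no other wall. Granting this, both inductions above are routine bookkeeping. Since this is precisely the content of \cite[Chapter~3]{BUILDINGS}, where the statement appears in essentially this form, and a self-contained proof would reproduce a sizeable portion of that chapter, we refer the reader there for the details, consistent with our treatment of the other foundational facts above.
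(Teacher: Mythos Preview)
Your proposal is correct and, in fact, considerably more detailed than the paper's own treatment: the paper simply cites \cite[Theorem~2.33]{BUILDINGS} and notes that the word-problem statement there has been rephrased in terms of galleries, without giving any outline. Your sketch---Matsumoto for reduced words via induction on length, then reduction of arbitrary words to reduced ones via the Exchange Condition, both ultimately deferred to the same reference---is the standard Tits argument and is exactly what lies behind that citation, so the approaches coincide; you have just unpacked the black box a little further before closing it again.
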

\begin{proof}
See \cite[Theorem 2.33]{BUILDINGS}. There they discuss the word problem and reduced words. We have framed it above terms of galleries.
\end{proof}

\subsection{Finite orders}

We will be concerned with finite order symmetries and thus we list them for $\tB$. Similar propositions can be written down for $\tA$ and $\tC$. 

In \cite[Chapter 3, page 37]{coxeter} it is proven that any congruent transformation of the plane is a translation, a rotation, a reflection, a glide-reflection, a screw-displacement or a rotary-reflection. We recall that a \textit{glide reflection} is the compositions of a reflection and a translation where the translation vector lies in the plane of the reflection. A \textit{screw-displacement} is a rotation composed with a translation where the axis of rotation is collinear with the translation vector. Finally, a \textit{rotary-reflection} is a rotation composed with a reflection where the axis of rotation is perpendicular to the plane of reflection.

\begin{proposition}
    The possible finite orders for a symmetry of $\tB$ are $1, 2, 3, 4$, or $6$. They are achieved as follows: reflections have order 2; there are rotations of orders $2$, $3$ and $4$; the rotary-reflections have order $2, 4$ and $6$. 
\end{proposition}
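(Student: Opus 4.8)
The plan is to enumerate the finite-order elements of the affine Coxeter group $\tB$ directly from the classification of congruent transformations of $\mathbb{R}^3$ recalled just above, together with the crystallographic constraint imposed by the fact that $\tB$ preserves a lattice. First I would observe that $\tB$ is the semidirect product of its translation subgroup $T$ (a full-rank lattice $\Lambda$, namely the one spanned by the cube edges, up to scaling) with the finite point group $W_0$, which here is the order-$48$ group $B_3$ (the symmetry group of the cube). A finite-order element $\Phi = t\cdot w$ with $w\in W_0$ and $t\in \mathbb{R}^3$ has order equal to the order of $w$ \emph{provided} the translational part averages away; more precisely, if $w$ has order $m$, then $\Phi^m$ is the translation by $(1 + w + \cdots + w^{m-1})t$, and $\Phi$ has finite order iff this vector is $0$, in which case $\mathrm{ord}(\Phi) = m = \mathrm{ord}(w)$. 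Hence the possible finite orders of elements of $\tB$ are exactly the orders of elements of $B_3$. So the first key step is: list the orders of elements of the cube group $B_3$, which are $1, 2, 3, 4, 6$ — the only subtlety being the order $6$, realized by a rotary-reflection (a $60^\circ$ rotary reflection does not occur, but a $120^\circ$ rotation composed with the central inversion gives an order-$6$ rotary reflection about a body diagonal). This accounts for the list $\{1,2,3,4,6\}$.

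Next I would go through the geometric types one at a time and check which can be realized with finite order inside $\tB$, matching the claims in the statement. Reflections: their linear parts are the reflections in $B_3$ (order $2$), and combined with a lattice translation one gets either a reflection or a glide-reflection; the pure reflections (those fixing a plane) do occur — indeed the generators $A,B,C,D$ are reflections — and they have order $2$. Rotations: the rotations in $B_3$ have orders $2$ (edge axes and face axes), $3$ (body-diagonal axes) and $4$ (face axes); each, when its screw component vanishes, gives an honest rotation of $\tB$ of that order, and one exhibits explicit ones using the closed galleries already written down (e.g. $(AB)$, $(AC)$, $(CD)$ give rotations since $\tB$ has $m_{AB}=2$, $m_{AC}=3$, $m_{CD}=4$). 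Rotary-reflections: the rotary-reflections in $B_3$ have orders $2$ (which is the central inversion $-I$), $4$ ($90^\circ$ rotary reflection about a face axis) and $6$ ($120^\circ$ rotary reflection about a body diagonal); again these lift to finite-order elements of $\tB$ when the translational part is killed, and one should check each actually occurs as a composition of $A,B,C,D$ (equivalently, corresponds to a pyramid in the orbit of $\cC$, which it does since $W_0 \le \tB$). Screw-displacements and glide-reflections have nonzero translational part by definition and hence infinite order, so they are correctly excluded.

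The main obstacle — or rather the only place that needs genuine care rather than bookkeeping — is showing that every order listed is \emph{actually attained within $\tB$} and not merely within the abstract point group $B_3$: one must verify that the relevant point-group element occurs with a \emph{trivial} (or at worst trivializable-by-lattice-translation) screw/glide component in the concrete group generated by the four reflections $A,B,C,D$. For the rotations this is immediate from the six closed galleries $(XY)^{m_{XY}}$ listed earlier, which exhibit order-$2$, order-$3$ and order-$4$ rotations explicitly. For the rotary-reflections one needs to produce, for each of the orders $2,4,6$, an explicit word in $A,B,C,D$ whose product is a rotary reflection of that order; since the full point group $B_3$ sits inside $\tB$ as the stabilizer-type subgroup generated by the reflections through a common vertex, and $B_3$ contains rotary reflections of orders $2,4,6$, such words exist. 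I would spell out one representative word for each order (the order-$2$ one being the product $ABABCACBCDCBCACBAB$-type expression realizing $-I$ fixing the center of a cube, the order-$4$ and order-$6$ ones chosen along a face axis and a body diagonal respectively) and check the order by a short computation using the braid relations, and conversely argue via the semidirect-product remark that no other orders are possible. This completes the proof.
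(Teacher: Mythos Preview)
Your approach is correct and differs substantively from the paper's. You use the algebraic structure: decompose $\tB$ as the semidirect product $\Lambda \rtimes W_0$ with $W_0 \cong B_3$ the finite Weyl group of order~$48$, observe that a finite-order affine element has the same order as its point-group image, and then read off the list $\{1,2,3,4,6\}$ from the element orders in $B_3$, with the geometric type (rotation versus rotary-reflection) determined by the sign of the determinant. Realization of each order is then automatic because $W_0$ itself embeds in $\tB$ as a vertex stabilizer (the parabolic $\langle B,C,D\rangle$), so no translational obstruction arises and your ``main obstacle'' dissolves.

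The paper proceeds instead by a direct geometric argument that avoids the semidirect product entirely. After discarding translations, screws and glides as infinite-order, it argues that the axis of any rotation preserving the colored tessellation must lie in the $1$-skeleton of the pyramidal subdivision (otherwise some pyramid would overlap its image without coinciding with any pyramid, since the pyramids have no nontrivial self-symmetry), so rotation orders are governed by the number of pyramids around an edge. Similarly, for a rotary-reflection the unique fixed point must be a vertex, reducing the question to the finite link group at that vertex, i.e.\ to one of the rank-$3$ parabolic subgroups. The possible orders are then extracted by inspecting those finite subgroups.

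Your route is cleaner, generalizes immediately to any affine $\widetilde{X}_n$, and makes the answer transparently the order spectrum of the underlying finite Weyl group. The paper's route is more elementary in that it uses only the local combinatorics of the tessellation and the rigidity of the chambers (Proposition~\ref{prop: noone fix C}); it trades conceptual economy for a somewhat longer case analysis and a final appeal to checking the finite link groups by hand.
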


\begin{proof}

     
If the transformation corresponding to the gallery is a translation, glide-reflection, and screw-displacement, the gallery has infinite order. Repeating the transformation will never bring the pyramids back to where they started as they will continue to move in the same direction.

If the transformation corresponding to the gallery is a reflection, the gallery has order two as all reflections have order 2.

If it is a rotation, we use the property that the transformation must leave the tesselation invariant and that the pyramids have no axis of symmetry. The axis of rotation must lie only on the edges of pyramids. Indeed, assume the axis of rotation passes through a pyramid. Then the points lying on the axis of rotation will be invariant under the transformation. Hence part of the pyramid the axis passes through maps to itself. However, the entire pyramid cannot map to itself as the pyramids have no axis of symmetry. Hence the pyramid the axis passes through does not map entirely to any pyramid in the tesselation. This is a contradiction.

As the axis of rotation cannot pass through a pyramid, it must lie along the edges or faces of the pyramids. The axis of rotation cannot lie along the face of a pyramid because of a similar argument as the one above. 

As the axis of rotation lies on the edge of a pyramid, it will pass through two vertices and therefore leaves these vertices invariant under rotation. Therefore the pyramids which share these two vertices will map to each other. Hence the possible orders of a rotation are the possible orders of the subgroups of the Coxeter group containing 2 elements

Rotary-reflections have multiple possibilities. The point of intersection of the axis of rotation and plane of reflection must lie on the vertex of a pyramid. The point of intersection is invariant under the transformation. If this point lies within a pyramid, that point of the pyramid will remain unchanged while the rest of the pyramid moves. As the pyramids have no symmetry, the pyramid will not map to itself and will also not map to any other pyramid as the point of intersection, which lies within the pyramid, is invariant.
If the point lies on a face, that point on the face will be invariant, and therefore the entire face will need to map to itself. However,as the face has no symmetry, this is not possible to do. Similarly, if the point of intersection lies on an edge, it must map the edge to itself which is not possible as only one point is left invariant.

The vertex the axis of rotation and plane of reflection intersect at will remain invariant, hence all the pyramids which share that vertex will remain invariant. Hence these pyramids must map to each other, and hence the order of the gallery must be achievable using a subgroup of reflections which result in the pyramids sharing a vertex, which is any subgroup that has 3 elements.

Therefore the possible orders of a Coxeter group are the same as the possible orders in the subgroups of the Coxeter group generated by 3 generators. These subgroups are the groups of symmetries of the links of the vertices, which are finite. Therefore the possible orders of these subgroups can be determined by checking each element.

\end{proof}


\section{Knots and Symmetries}\label{sec: knots and symmetries}

As we have discussed above, knots can have different types of symmetries. Some of those are of the kind achieved as symmetries of the tessellations $\tA, \tB$ and $\tC$ as we have discussed above. We are particularly interested in threefold rotational symmetries and we will restrict to those. Because we will work primarily with $\tB$ we will state our results for this tesselation. Many of these results can be extended to other ones but we leave those cases to the interested reader

\begin{proposition}\label{prop: knotrotational}
    A knot $K$ has $3-$rotational symmetry if and only if there exists a gallery
    \begin{equation*}
        (s_1, s_2,..., s_k)
    \end{equation*}
    in $\tB$ with $w = s_1s_2...s_k$ of order $3$ and such that 
    the polygonal curve created by joining consecutive centers
    of
    \begin{equation*}
        (s_1, s_2,..., s_k, s_1, s_2,..., s_k, s_1, s_2,..., s_k)
    \end{equation*}
    is a knot of the same type as $K$. In this situation, we will say that \textit{the knot $K$ is represented by a word of order $3$} or that $K$ is \textit{$3-$periodic}. 
\end{proposition}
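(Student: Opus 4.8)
The plan is to prove the two implications separately, using Proposition~\ref{prop: coherent codification} as the main bridge between galleries and symmetries. For the forward direction, suppose $K$ has threefold rotational symmetry, realized by a rotation $\rho$ of order $3$ about some axis $\ell$. The first step is to arrange, possibly after an isotopy of $K$, that $K$ is itself a gallery-curve in $\tB$, i.e.\ a polygonal path joining consecutive centers of pyramids, whose symmetry rotation $\rho$ is one of the order-$3$ symmetries of the colored tesselation classified in the Finite Orders subsection. Here one invokes that the order-$3$ rotations of $\tB$ have axes along edges of pyramids and that any rotation of order $3$ of $\mathbb{R}^3$ is, up to conjugacy by a similarity, of this form. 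Then I would pick a pyramid $\cC$ on the curve, let $\cC' = \rho(\cC)$, and let $g = (s_1,\dots,s_k)$ be the sub-gallery of $K$ running from $\cC$ to $\cC'$ (one third of the total loop). By Proposition~\ref{prop: coherent codification}, the word $w = s_1\cdots s_k$ is exactly the symmetry taking $\cC$ to $\cC'$, so $w = \rho$; in particular $w$ has order $3$. Because $\rho$ maps the whole gallery $K$ to itself shifted by one third, the full loop of $K$ is the concatenation $(s_1,\dots,s_k,s_1,\dots,s_k,s_1,\dots,s_k)$: applying $\rho = w$ to the first third produces the second third with the same crossing-sequence, and again for the third. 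This gives the desired representation.

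For the converse, suppose we are given a gallery $(s_1,\dots,s_k)$ with $w = s_1\cdots s_k$ of order $3$, and form the length-$3k$ concatenation and its polygonal curve $\gamma$. The step is to show $\gamma$ is a closed curve with genuine threefold rotational symmetry. Closure: by Proposition~\ref{prop: coherent codification} the symmetry realized by the full word $w^3$ takes the starting pyramid $\cC$ to the ending one, but $w^3 = 1$, so the end pyramid equals $\cC$; hence the gallery closes up (as a cyclic sequence of pyramids) and $\gamma$ is a closed polygonal curve. Symmetry: apply the rigid motion $w$ to $\gamma$. Starting from $\cC$, the gallery $(s_1,\dots,s_k)$ ends at $w(\cC)$, which (again by Proposition~\ref{prop: coherent codification}) is exactly the pyramid reached after the first block of the concatenation; continuing, $w$ carries the pyramid sequence of $\gamma$ to itself, cyclically shifted by $k$ steps. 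Since $w$ is a rigid motion sending the center of each pyramid in the list to the center of the pyramid $k$ steps later, $w$ maps $\gamma$ onto itself, rotating it by ``one third of a period''. As $w$ has order $3$ this is an honest threefold rotational symmetry of $\gamma$, so $K = $ [the knot type of $\gamma$] is $3$-periodic, and running the argument in reverse relates it to the given $K$.

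The main obstacle, and the step needing the most care, is the forward direction's reduction: starting from an abstract knot $K \subset \mathbb{R}^3$ with a topological (or geometric) order-$3$ symmetry, one must produce a gallery in $\tB$ realizing $K$ whose shift symmetry is realized by an element of the Coxeter group. This has two sub-points. First, one needs that every nontrivial knot type is realizable by some gallery in $\tB$ at all — this follows because $\tB$ refines the cubic lattice and every knot is realizable by a lattice polygon, subdivided appropriately. Second, and more delicate, one must ensure the realization can be chosen equivariantly: given the order-$3$ symmetry of $K$, choose the gallery so that its one-third sub-gallery is carried to the next by a lattice (hence Coxeter) rotation. For this I would fix the rotation axis to lie along a common edge of pyramids, realize one fundamental domain's worth of the knot as a gallery arc, and then define the rest of the gallery as the images of that arc under $w$ and $w^2$ where $w$ is the order-$3$ Coxeter rotation about that axis; Proposition~\ref{prop: noone fix C} and Proposition~\ref{prop: symmetries oneone chambers} guarantee $w$ is well-defined and has order $3$, and the equivariance is then built in by construction. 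One should also remark that the crossing sequences of the three blocks automatically agree, since each block is the $w$-image of the previous and $w$ permutes the faces of pyramids consistently with the left-to-right ``color behind the face'' codification.
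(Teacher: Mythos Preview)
Your proposal is essentially correct and follows the same overall strategy as the paper. You are more thorough on the converse direction, which the paper leaves implicit; your argument there via $w^3=1$ and the cyclic shift under $w$ is exactly right.

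For the forward direction, the paper fills in the step you flag as the main obstacle with a concrete local-straightening procedure rather than the abstract ``realize a fundamental domain and rotate'' that you sketch. Specifically, after aligning the axis of $K$ with a threefold axis of $\tB$, the paper shrinks the pyramid scale until every simplex $P$ meets $K$ in a connected, untangled arc. It then equivariantly pushes $K$ off all vertices and edges (performing each local modification at all three rotationally related copies simultaneously), and finally, within each pyramid $P$, replaces the arc $K\cap P$ by the broken segment through the center of the entry face, the center of the pyramid, and the center of the exit face. Because each replacement is local and done to an untangled arc, the knot type is preserved; because the replacements are done equivariantly, the threefold symmetry survives. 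This has the advantage of working with the embedded curve $K$ throughout, so embeddedness and knot type are controlled at every step, whereas your ``rotate one fundamental arc'' sketch still owes an argument for why the three rotated arcs do not collide and why the resulting polygon has the correct knot type. Both approaches can be completed, but the paper's is more self-contained on that point.
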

\begin{proof}
The line that goes through opposite vertices of a cube is an axis of a threefold rotational symmetry of the cube. Thus, it is actually an axis of a threefold rotation of the whole cubic lattice. If we divide the cube into eight equal cubes, with side lengths half that of the original one, this axis is still an axis of a threefold symmetry of the whole cubic lattice created by subdivisions. Notice that these threefold rotations are also symmetries of $\tB$ if we use these cubes to create the tesselation of pyramids. The above remarks serve the purpose of allowing us to change the scale of the pyramids without changing the axis of rotation. 

Let us suppose now that $K$ has threefold symmetry. By aligning the axis of symmetries of $\tB$ with that of $K$, and letting the scale of the pyramids go to $0$, we can assume the following:
\begin{enumerate}
    \item For every simplex $P$ in the tesselation, $P\cap K$ is connected (possibly empty). The simplex are the vertices, the segments, the faces and the pyramids themselves.
    \item For every pyramyd $P$ in the tesselation, $P\cap K$ is untangled.
\end{enumerate}
We must now guarantee that without loss of generality we can avoid certain pathologies. The first one if the knot goes through one of the vertices of the tesselation. If this is the case, very close to the vertex, we cut the knot around the vertex and replace the string with a circular arch that avoids the vertex and all the edges emanating from it.
Because of the rotational symmetry, this happens in three different vertices, and in each we repeat this process in rotationally symmetric way. Furthermore, this does not change the knot type, nor creates new tangles. Notice that this path can be chosen in such a way to enter and leave each pyramid around the vertex at most once.

\begin{center}
    \includegraphics[scale = 0.6]{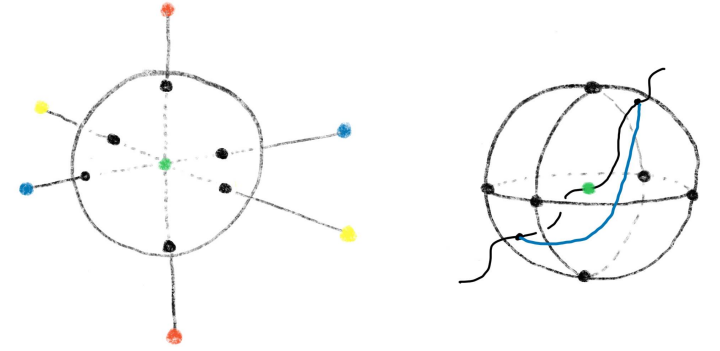}
    \captionof{figure}{How a sphere looks close to a (green) vertex and the substituting path on the surface.}
\end{center}

We claim this preserves the connectedness of $P\cap K$. If the vertex constituted this intersection, then now it is empty and there is nothing else to do. If not, the there is some point $q$ that remains in $P\cap K$ but that was not substituted when we replaced the path for a new one (maybe at the expense of making this substitution closer to the vertex). Thus, every point in the \textit{new} connected component, of some simplex, can be joined with this $q$ without leaving the pyramid, because once it leaves it, it does not come back to it in the new string. Thus, if there was another component, it was already there before pushing the vertex, which cannot be the case. Thus, without loss of generality, we can assume the knot does not go through the vertices of the tesselation.

Doing a similar argument, but now using a small cylinder around the closed segment on an edge that intetsects the knot, we can also avoid the edges. Notice that it might be that the knot intersects an edge at a single point in which case the cylinder is a disk. We leave the details to the reader.

Once we avoid vertices and edges all together, we do as follows. Pick any pyramid $P$ such that $P\cap K$ is nonempty. $K$ intersects $P$ for the first and for the last time in some points $P_1$ and $P_2$ in the interior of two faces $F_1$ and $F_2$. If $C_1, C_2$ and $C$ are the centers of $F_1, F_2$ and the pyramid, we have that the path that constitutes $K\cap P$ can be substituted by $P_1C_1CC_2P_2$. Doing this in a single pyramid, doesn't change the knot type. Notice it is for this statement that we used the second assumption given above. Furthermore, we can do it in the three rotationally symmetric pyramids corresponding to $P$. 

Thus, repeating this until we have exchanged all paths by straight paths between centers of faces and pyramids, we get indeed a path created by joining centers of pyramids that has threefolds symmetry That is, the knot is represented by a word of order three, as desired.
\end{proof}
\begin{remark}
    This proof works for other symmetries, but the type of symmetry represented by the gallery does not get characterized by its order unless it is the threefold rotation. 
\end{remark}

\section{Knotting Length in the Coxeter Lattices}\label{sec: knotting length in the Coxeter Lattices}

We will now discuss the minimal stick numbers. 

\begin{definition}\label{def: stick numbers}
Let $K$ be a knot. The minimal number such that there exists a gallery in $\tB$ that knots as $K$ is called the \textbf{minimal stick number} of $K$ in $\tB$. We will denote it by $l(K)$ or $l_{\tB}(K)$.
\end{definition}
It is clear that for any knot type there are galleries that knot in that way. In particular, $l(K)$ exists for every $K$. Thus there is a smallest number of sticks necessary to knot nontrivially.
\begin{definition}
    The minimal number such that there exists a gallery that knots in nontrivial way is called the \textbf{minimal stick number} of $\tB$. We denote it by $l(\tB)$. Thus
\begin{equation*}
    l(\tB) = \min\{l_{\tB}(K) \mid K\neq 0_1\}.
\end{equation*}
\end{definition}
\begin{remark}
    We will denote the analogous stick numbers in similar way. For example, for the cubic lattice $C$ we write $l_C(K)$ and $l(C)$.
\end{remark}

Threefold symmetric knots do not only have a minimal stick number but also a minimal number of sticks required to build a gallery that is itself threefold rotationally symmetric. We thus define
\begin{definition}\label{def: stick numbers}
Let $K$ be a knot that admits threefold rotational symmetry. The minimal number such that there exists a threefold rotational gallery in $\tB$, of such length, that knots as $K$ is called the \textbf{$3$-minimal stick number} of $K$ in $\tB$. We will denote it by $l_3(K)$ or $l_{3, \tB}(K)$.
\end{definition}
We have proved in proposition \ref{prop: knotrotational} that for any symmetric knot, we can construct galleries that are rotationally symmetric from words of order $3$ in $\tB$. This implies that $l_{3}(K)$ exists for every such $K$. In particular, there is a smallest number of sticks necessary to knot nontrivially and symmetricaly.
\begin{definition}
    The minimal number such that there exists a rotationally symmetric gallery that knots in nontrivial way is called the $3$-\textbf{minimal stick number} of $\tB$. We denote it by $l_3(\tB)$.
\end{definition}
The cubic lattice $C$ admits threefold rotational symmetries, thus we can define the analogous stick numbers. We will denote them by $l_{3, C}(K)$ and $l_3(C)$.

\begin{example}
    The trivial knot has $l(0_1) = 4$, corresponding for example to the word $ADAD$. It has $l_3(0_1) = 6$ corresponding to $CDCDCD$. Notice that these two are part of the defining relations of the Coxeter group $\tB$. 

    A trivial gallery that is constructed by a rotary-reflection of order $6$ is the following
    \begin{center}
    \includegraphics[scale = 0.5]{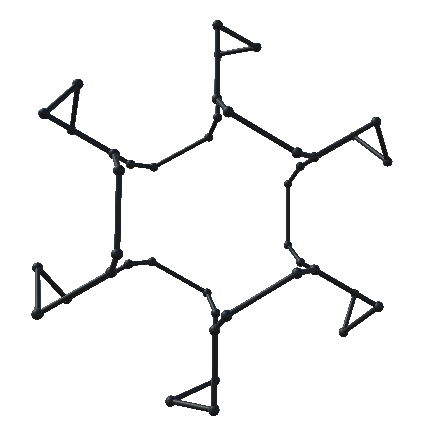}
    \captionof{figure}{$CABDACB$}
    \end{center}
    We can detect the reflection by noticing how the hinge that joins the \say{flaps} alternates sides.
\end{example}

\begin{example}
A symmetric trefoil of order $42$ is the next one.
\begin{center}
    \includegraphics[scale = 0.5]{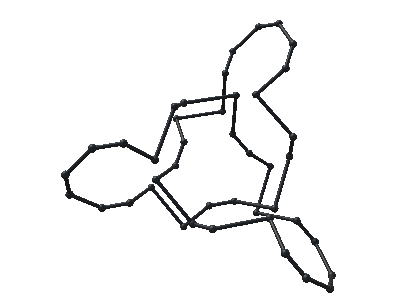}
    \captionof{figure}{$CBDCDCDCBADCBA$}
    \label{fig: minimal trefoil with threefold symmetry}
    \end{center}
\end{example}
As the next theorem will explain, this trefoil is a minima for the $3$-stick length.
\begin{example}
    The eight knot (\textit{i.e.}$4_1$) does not have threefold symmetry (see for instance \cite[Example 4.1]{BoyleOddOrder}). However it does have a rotary-reflection symmetry of order $4$.
\begin{center}
    \includegraphics[scale = 0.3]{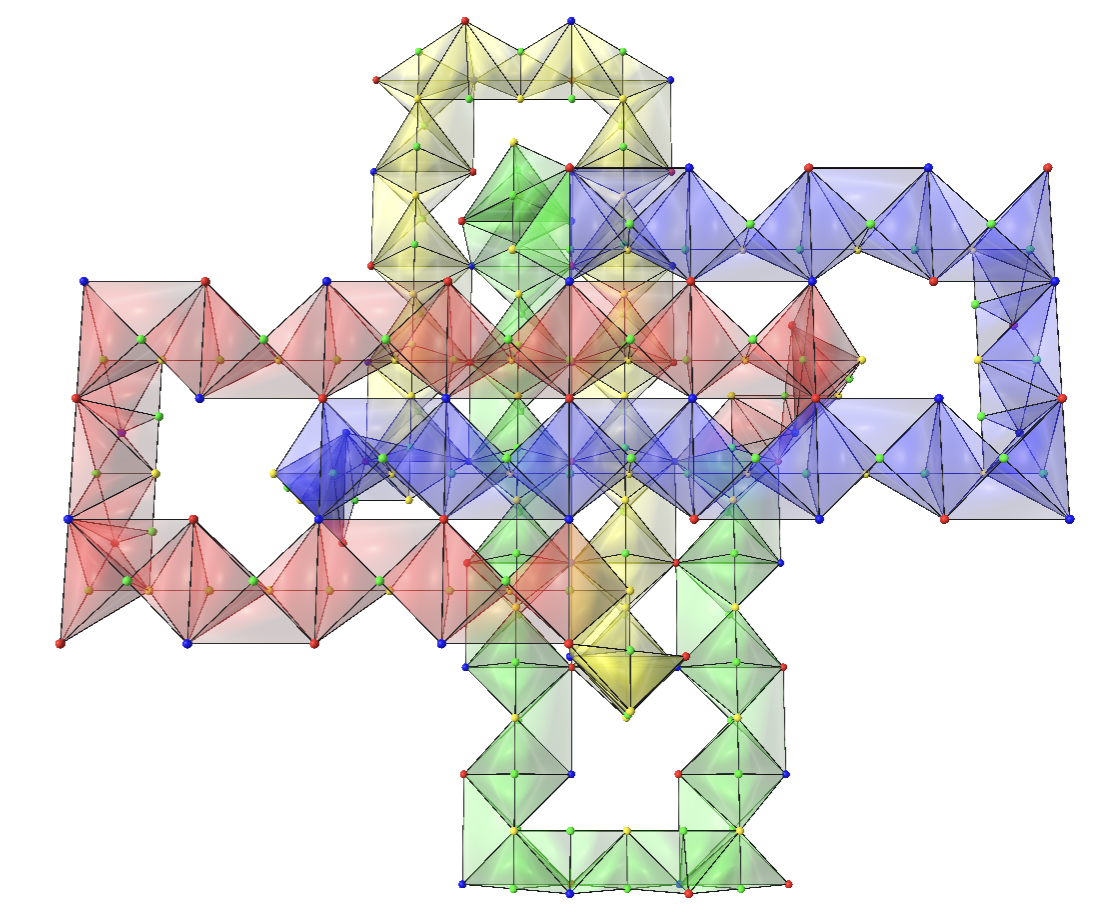}
    \captionof{figure}{An eight knot made by repeating four times a gallery of order $69$.}
    \end{center}  
The word being repeated is
\begin{align*}
   \;& DCBACDCBACDCBACDCBACACD\\
    & CABCDCBCABCDCABCDCABCDC\\
    & ABCDCABCDCABCDABCDCADCB.
\end{align*}
Notice that this is the composition of $69$ reflections, thus it is indeed the composition of a rotation (i.e. \textit{the composition first 68 reflections}) and the last reflection.
\end{example}

We have
\begin{theorem} \label{thm: sticknumbersB3 }
    In $\tB$ we have $l(\tB) \le 40$ and $l_3(\tB)  = 42$. These minimums are achieved by trefoils. Hence, $l(3_1) = 40$ and $l_3(3_1) = 14$.
\end{theorem}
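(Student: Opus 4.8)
The plan is to prove the upper bounds by exhibiting explicit galleries and the lower bounds (hence the equalities) by exhaustive enumeration, using Proposition~\ref{prop: knotrotational} throughout to pass between threefold-symmetric galleries and words of order $3$ in $\tB$.

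For the upper bounds I would start from the word $w=CBDCDCDCBADCBA$ of Figure~\ref{fig: minimal trefoil with threefold symmetry}. One checks in the Coxeter presentation of Theorem~\ref{thm: coxeter presentation} that $w$ has order exactly $3$ — compute $w$, $w^2$, $w^3$, for instance as permutations of the chambers around a vertex — so that $(w,w,w)$ returns to its initial chamber by Proposition~\ref{prop: coherent codification}. Realizing the $42$ pyramids visited by $(w,w,w)$ as concrete pyramids of $\mathbb{R}^3$, one verifies that the closed polygon on their centers is simple and, by computing a knot polynomial of a generic projection or by an explicit reduction, that it is the trefoil; this gives $l_3(3_1)\le 14$ and $l_3(\tB)\le 42$. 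For $l(\tB)\le 40$ (hence $l_{\tB}(3_1)\le 40$) I would exhibit a closed gallery of length $40$, in practice obtained by applying the Tits $M$-moves and symmetry-breaking local modifications to the length-$42$ example, and verify in the same way that it is a trefoil.

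For the lower bounds, Proposition~\ref{prop: knotrotational} says that a nontrivial threefold-symmetric knot is represented by a word $v$ of order $3$ with $(v,v,v)$ knotted, and the corresponding closed gallery has length $3|v|$; in particular $l_3(\tB)$ is a multiple of $3$. So it suffices to enumerate all words $v$ over $\{A,B,C,D\}$ with $|v|\le 13$, keep those of order $3$ in $\tB$, form the polygon on the centers of the pyramids visited by $(v,v,v)$, discard the self-intersecting ones, and confirm that every survivor is unknotted; with the example this forces $l_3(3_1)=14$ and $l_3(\tB)=42$. The equality $l_{\tB}(3_1)=40$ comes from the analogous, larger search over all closed galleries — words $s_1\cdots s_k=1$ in $\tB$ — of length $k\le 39$, none of which knots. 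Both searches are made tractable by dividing out symmetry: the initial chamber is immaterial by Proposition~\ref{prop: symmetries oneone chambers}, a closed gallery may be taken up to cyclic rotation and reversal, the Tits $M$-moves collapse large families of words onto the same gallery, and one may restrict to self-avoiding galleries since a non-self-avoiding one cannot yield a simple polygon.

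The main obstacle is the size and trustworthiness of these enumerations. Even after pruning, the number of self-avoiding closed galleries of length up to $39$ is large, so the canonicalization — Tits moves, the stabilizer of a chamber, and cyclic and reversal symmetry of the polygon — must be implemented efficiently enough to run on modest hardware; this is precisely the bottleneck the authors report, and it is what pushed them toward the much smaller space of order-$3$ words. The geometric realization also needs care: unlike $\tA$ and $\tC$, the complex $\tB$ carries two mirror-image colorings of the cubes arranged in an alternating pattern, so the rule assigning coordinates to the pyramid reached by a given prefix must track which coloring its ambient cube has. Finally, to make the unknottedness claims rigorous one should certify triviality by an invariant that provably detects the unknot among the (few, small) knot types that can occur at these lengths, or by an explicit sequence of Reidemeister moves, rather than trusting a single polynomial that could in principle fail to separate them.
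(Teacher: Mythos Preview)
Your approach to $l_3(\tB)=42$ matches the paper's: an exhaustive search over words of order $3$ and length at most $13$, followed by verifying that the length-$14$ word $CBDCDCDCBADCBA$ yields a trefoil. The paper adds one optimization you omit: since $D$ is the only generator that moves between cubes, a closed gallery must contain an even number of $D$'s, and in the threefold-symmetric case each copy of $v$ must itself have evenly many $D$'s; the search is then organized by the number of $D$'s rather than by raw length.

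There are two genuine discrepancies. First, the length-$40$ trefoil in the paper is not obtained from the length-$42$ one by Tits moves or local modifications; it was found independently while searching galleries with at most $8$ letters $D$, and it lives in only four cubes, whereas the symmetric length-$42$ trefoil has $12$ $D$'s. Your proposed route --- shortening the symmetric example by two via $M$-moves --- would require a backtracking step $XX$ in the length-$42$ gallery that is not there, so this part of the plan does not work as stated.

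Second, and more seriously, you propose to establish $l_{\tB}(3_1)=40$ as an equality by exhausting all closed galleries of length at most $39$. The paper does not do this and explicitly says it cannot: after the $8$-$D$ search, ``the lists were too long for the machines to produce and for us to check even with the reduction algorithm.'' The paper only proves the upper bound $l(\tB)\le 40$; the clause ``$l(3_1)=40$'' in the theorem is not supported by the proof given, and Theorem~\ref{lthm: minimal is 42} in the introduction is stated only as an inequality. So your plan overpromises here: the enumeration you describe is precisely the one the authors report as intractable.
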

\begin{proof}
This result was proven by an exhaustive computational search. In summary, we constructed all possibilities, ruling out what we could using computer programs and some eliminating criteria. After that, we ruled out the remaining cases by looking at them with an interactive program we coded. It allowed us to see, manipulate and explore the different $\tX$ (and also the cubic galleries of $C$). It is avaialable at \cite{CoxeterShapes}.

The tables for the galleries that have order $3$ were created by a tree search in breadth (analogous to what in \cite{mannminimalknottingnumbers} they all a \say{worm} algorithm). We picked the first letter and then added a different letter after it. We eliminated the word if the knot self intersected or if after repeating it three times it didn't close (i.e. not order $3$).

Eliminating criteria we used were of two sorts. Firstly, we need the word to have an even number of letters $D$, so we ignored everything that didn't because it would not close. The reason for this is as follows: $D$ is the letter that corresponds to a change in cube. Since the gallery must return to the first cube it started in, the number of $D$'s must be even. This much is true in any closing gallery. However, in one that is created by repeating a word three times, this word itself must have an even number for otherwise the concatenation will not close. We further organized the lists by increasing number of $D$'s appearing, which is equivalent to bounding the number of cubes the gallery visits. 

Finally, we performed a reduction algorithm that scanned the word for the subwords coming from the defining relations of the Coxeter presentation of $\tB$ and some of the Tits M-moves. This reduced the length of the lists significantly. 

We found the minimal length for nontrivial knots with threefold symmetry are of length $42$ and were produced only by trefoils. An example of which is given in \ref{fig: minimal trefoil with threefold symmetry} above.

To bound the knotting number by $40$ we found the following trefoil.
\begin{center}
    \includegraphics[scale = 0.5]{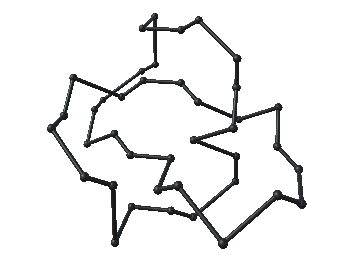}
    \captionof{figure}{$DABCACDACDBCABCACDBACDCBDCBCABDCACABDCBC$}
\end{center} 
These appeared while we searched the lists of knots with up to $8$ letter $D$'s. It visits four cubes.
    \begin{center}
    \includegraphics[scale = 0.4]{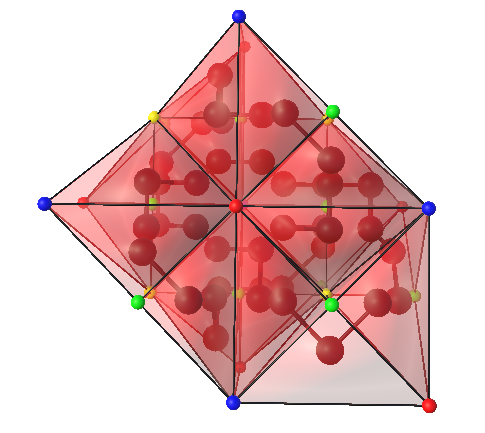}
    \captionof{figure}{The trefoil of length 40 in the four cubes.}
\end{center} 
After this the lists were too long for the machines to produce and for us to check even with the reduction algorithm.
\end{proof}
\begin{remark}
 In \cite{Diao} it is proven that $l(C) = 24$. However, the minimal trefoil presented there is not rotationally symmetric. In \cite{huhohlatticestick}, a rotationally symmetric minima is given, thus proving $l_3(C) = 24$ and $l_{3, C}(3_1) = 8$.
\end{remark}
\begin{example}
   Another trefoil of with $8$ D's that we found through our searches that presents rotational symmetry of order $2$ is the following
    \begin{center}
    \includegraphics[scale = 0.4]{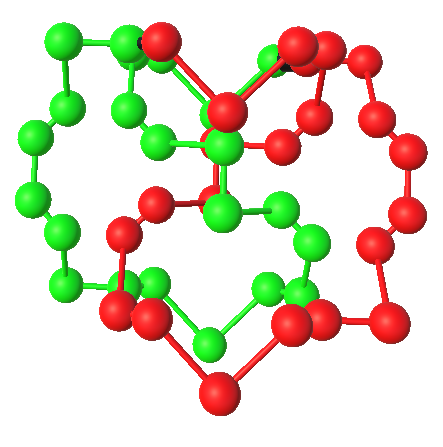}
    \captionof{figure}{$CBACACBDACBCABCDACDCBD$}
\end{center} 
\end{example}

\begin{example}
    We show some trefoils produced from order $3$ words that are interesting.
    \begin{center}
    \includegraphics[scale = 0.5]{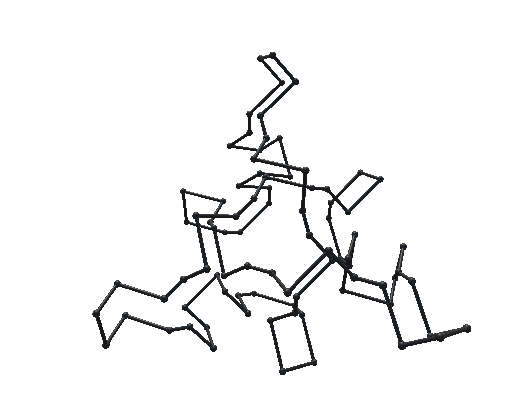}
    \captionof{figure}{$DCADBDACDADBADACACBABDCADCBA$}
    \end{center}
The trefoil above has length $48$ and is created from a word of order $16$. 
A trefoil that is visually deceptive is the following one.
    \begin{center}
    \includegraphics[scale = 0.5]{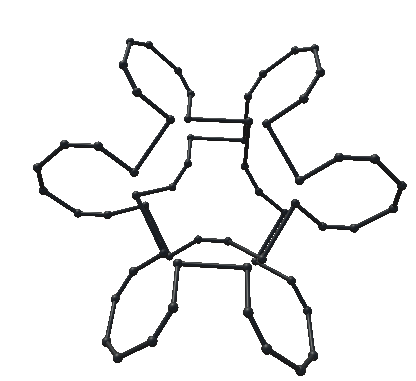}
    \captionof{figure}{$DCDCDCDBDCDCDCBADCBA$}
    \end{center}
At first it looks this trefoil has sixth rotational symmetry. That cannot happen because no such rotation exists in $\tB$. Indeed, the first ten letters are $DCDCDCDBDC$ and the last ten are $DCDCBADCBA$. These are \textit{different} elements of the Coxeter group $\tB$. You can notice the difference in how consecutive \say{octagons} are joined.

\end{example}


\section{Translation from the Cubes to the Pyramids}\label{sec: translation from the cubes to the pyramids}

The cubic lattice  $C$ also admits a threefold rotational symmetry: \textit{it is a rotation of $120^\circ$ along an axis joining two opposite vertices of a cube}. For the purposes of constructing bigger knots with threefold rotational symmetry it is useful to find them first in the cubic lattice and then change them to $\tB$. 

\subsection{Codifying the cubic galleries in the static reference frame}\label{subsec: codifycubes}

Just as we codified the galleries in $\tB$, we want to codify those in the cube. This tesselation is not coming from a Coxeter Group.  However, we will describe a similar codification by how you move across consecutive cubes.

We consider a fixed orientation for $\mathbb{R}^3$ and based in the direction of the positive axis we will decide what is front, right and up. Front is the direction of the positive $x$-axis, right is the direction of the positive $y$-axis and up is the direction of the positive $z$- axis. We will denote by $F, R, U$ these front, right and up directions. Their negatives, which we call back, left and down, by $B, L, D$.

If three unit vectors along the axis are regarded as the three sides in a corner of a unit cube, then the line through the origin and the opposite vertex is an axis for a positive threefold rotation and a negative threefold rotation. The sides of this cube from the origin can be labeled with three of the directions $F, B, R, L, U, D$. We will always suppose this is the case. We emphasize that it is not always the same three vectors the ones that are being identified as the sides of the cube. We shall call this cube the \textbf{reference cube}.

The following is an example of how the description of a gallery changes when we apply such a threefold rotation. Other analogous propositions are obtained by changing the three vectors, or equivalently, changing the octant where the reference cube is.
\begin{proposition}
  Let $g = (s_1, s_2,..., s_k)$ be a cubic gallery starting at some point $p$ and with $s_i\in\{F, B, R, L, U, D\}$. Let $\Phi$ be the positive rotation by $120$ degrees (i.e. the positive threefold rotation) and suppose the reference cube is in the positive octant. Then the gallery $\Phi(g)$ starts at $\Phi(p)$ and is codified by
    \begin{equation*}
        (\sigma(s_1),..., \sigma(s_k)),
    \end{equation*} 
    where $\sigma$ is the permutation $(FRU)(BLD)$. Consequently, the negative threefold rotation is given by $\sigma^2 = (FUR)(BDL)$.
\end{proposition}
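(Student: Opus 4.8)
The plan is to reduce the statement to a single, concrete linear-algebraic computation: how the rotation $\Phi$ acts on the six coordinate directions. Since a cubic gallery is nothing more than a starting point together with the sequence of unit translations taking each cube to the next, it suffices to understand $\Phi$ as a linear map on $\mathbb{R}^3$ and to track where it sends the six vectors $\pm e_x, \pm e_y, \pm e_z$ (which are our $F,B,R,L,U,D$).

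First I would set up coordinates. The axis of the positive threefold rotation is the line through the origin in the direction $(1,1,1)$, i.e. along $e_x+e_y+e_z$, since this is the long diagonal of the reference cube sitting in the positive octant. The positive rotation by $120^\circ$ about this axis is the unique rotation of order $3$ fixing $(1,1,1)$ and permuting the three coordinate axes; concretely, one checks that the linear map $\Phi$ defined by $\Phi(e_x)=e_y$, $\Phi(e_y)=e_z$, $\Phi(e_z)=e_x$ is orthogonal, has determinant $+1$, fixes $(1,1,1)$, and has order $3$, hence is exactly this rotation (the only ambiguity, positive versus negative, is fixed by the orientation convention adopted just before the statement). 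Applying $\Phi$ to the negatives then gives $\Phi(-e_x)=-e_y$, etc., so on the set $\{F,B,R,L,U,D\}$ the map $\Phi$ acts by the permutation $\sigma = (F\,R\,U)(B\,L\,D)$. Here I am using the correspondence from the text: $F=e_x$, $R=e_y$, $U=e_z$ and $B,L,D$ their negatives.

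Next I would assemble the gallery statement. Write the cubic gallery $g=(s_1,\dots,s_k)$ with base point $p$; by definition the $j$-th cube has its base corner (say) at $p + s_1 + \cdots + s_{j-1}$, and $s_j$ is the translation to the next cube. Applying the isometry $\Phi$ (which is linear here since the axis passes through the origin) sends this to $\Phi(p) + \Phi(s_1) + \cdots + \Phi(s_{j-1})$, and the step from cube $j$ to cube $j+1$ becomes $\Phi(s_j)=\sigma(s_j)$. Hence $\Phi(g)$ is the gallery starting at $\Phi(p)$ with step sequence $(\sigma(s_1),\dots,\sigma(s_k))$, which is the claim. The final sentence of the proposition follows because the negative threefold rotation is $\Phi^{-1}=\Phi^2$, and the induced permutation is $\sigma^2$; computing $\sigma^2$ directly gives $(F\,U\,R)(B\,D\,L)$.

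I do not expect a serious obstacle here — the content is genuinely just the observation that $\Phi$ commutes with the combinatorics of "move to the adjacent cube" because it is a linear isometry preserving the cubic lattice, together with the order-$3$ cyclic permutation of the axes. The only point requiring a word of care is the orientation bookkeeping: one must confirm that the \emph{positive} $120^\circ$ rotation (as fixed by the right-hand/front-right-up convention stated just above) induces $(F\,R\,U)(B\,L\,D)$ rather than its inverse. This is settled by evaluating $\Phi$ on one basis vector and checking the sign against the convention, so it is a one-line verification rather than a real difficulty.
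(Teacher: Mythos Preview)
Your proposal is correct and follows the same idea as the paper's proof, which is the single sentence ``The rotation just permutes the sides of the cube in a cyclic fashion of order three.'' You have simply made explicit the coordinate computation and the gallery bookkeeping that the paper leaves to the reader; in particular your verification that the \emph{positive} rotation about $(1,1,1)$ sends $e_x\mapsto e_y\mapsto e_z\mapsto e_x$, hence $F\mapsto R\mapsto U\mapsto F$, is exactly the content the paper is gesturing at.
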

\begin{proof}
    The rotation just permutes the sides of the cube in a cyclic fashion of order three. 
\end{proof}

\begin{example}\label{ex: orderthree trefoil in cubes}
The following is a trefoil in a cubic path that presents rotational symmetry. The reference cube has the vectors $R, F, D$ as sides. 
    \begin{center}
    \includegraphics[scale = 0.5]{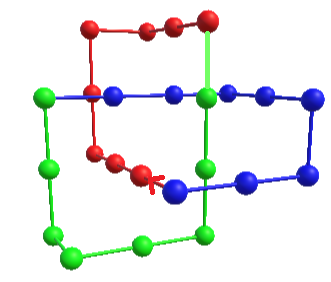}
    \captionof{figure}{A trefoil with threefold symmetry formed by three congruent cubic pieces}
    \label{fig: Cubic gallery of $3_1$ with threefold symmetry.}
    \end{center}
A rotation of $-120$ takes the red piece onto the green piece, which in turns rotates onto the blue one. Lastly, the blue one rotates onto the red one. Thus the rotation is given by the permutation
\begin{equation*}
    (FDR)(BUL).
\end{equation*}
The word describing the red piece is $FFFUURBB$. Applying the permutation we described above we get that the way the pieces rotate is
\begin{equation*}
    {\color{red}FFFUURBB} \longrightarrow {\color{green}DDDLLFUU} \longrightarrow {\color{blue}RRRBBDLL}
\end{equation*}

\end{example}

The following result is proven in the same way as proposition \ref{prop: knotrotational} so we just comment on the essentials.

\begin{proposition}\label{prop: cubesrotation}
    A knot $K$ has threefold rotational symmetry if and only if there exists a cubic gallery
    \begin{equation*}
        g = (c_1, c_2,..., c_k)
    \end{equation*}
    and a threefold rotational symmetry $R$, codified by the permutation $sigma$, of  $C$ such that the polygonal curve created by joining consecutive centers
    of
    \begin{equation*}
        (g, \sigma(g), \sigma^2(g))
    \end{equation*}
    is a knot of the same type as $K$. Notice that $\sigma$ is a product of two disjoint three cycles.
\end{proposition}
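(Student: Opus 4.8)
The plan is to mirror the proof of Proposition \ref{prop: knotrotational}, replacing the Coxeter-complex machinery with the elementary geometry of the cubic tessellation. The ``only if'' direction is the substantive one, so I would begin there. Suppose $K$ has threefold rotational symmetry, realized by a rigid rotation $R$ of $\mathbb{R}^3$ of order $3$ with $R(K) = K$. First I would align the axis of $R$ with the axis of a threefold rotational symmetry of the cubic lattice $C$ --- namely a line through two opposite vertices of a reference cube, as described in Subsection \ref{subsec: codifycubes} --- and then rescale the lattice (shrink the cube side length to $0$), noting that this rescaling keeps the same axis fixed and so $R$ remains a symmetry of the finer lattice. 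After rescaling finely enough I can assume: (1) for every face, edge, vertex or cube $P$ of $C$, the intersection $P \cap K$ is connected (possibly empty), and (2) for every cube $P$, the arc $P \cap K$ is unknotted/untangled inside $P$.

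Next I would clear the two standard pathologies exactly as in Proposition \ref{prop: knotrotational}: if $K$ passes through a vertex of $C$, I excise a small ball around that vertex and reroute $K$ along the boundary sphere to miss the vertex and all edges emanating from it, doing this simultaneously at the three $R$-images of that vertex so as to preserve the symmetry, the knot type, and the no-new-tangles property; similarly, using thin cylinders (or disks) around edge-segments that $K$ meets, I push $K$ off all edges, again $R$-equivariantly. The connectedness claims (1) are preserved by the same argument given there: any surviving point of an intersection witnesses that no new component is created. Once $K$ meets only the interiors of faces and the interiors of cubes, I replace, in each cube $P$ that $K$ enters, the arc $P \cap K$ by the polygonal path $P_1 C_1 C C_2 P_2$, where $P_1, P_2$ are the first and last crossing points, $F_1, F_2$ the faces containing them, $C_1, C_2$ their centers, and $C$ the center of $P$; assumption (2) guarantees this does not change the knot type, and I do it equivariantly over the three $R$-orbit cubes. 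Iterating over all cubes yields a polygonal curve obtained by joining centers of consecutive cubes, i.e. a cubic gallery $g = (c_1,\dots,c_k)$ in $C$, which by construction is invariant under $R$; reading off $R$ in the static reference frame gives the permutation $\sigma$ (a product of two disjoint $3$-cycles, as in the Propositions of Subsection \ref{subsec: codifycubes}), and $R$-invariance of the closed gallery is exactly the statement that the closed curve through the centers of $(g, \sigma(g), \sigma^2(g))$ has knot type $K$. The ``if'' direction is immediate: given such $g$ and $\sigma$, the curve through the centers of $(g,\sigma(g),\sigma^2(g))$ is by the very construction of $\sigma$ carried to itself by the rigid rotation $R$, so it is a knot with threefold rotational symmetry of type $K$.

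The main obstacle, and the only place where care beyond Proposition \ref{prop: knotrotational} is needed, is bookkeeping the $R$-equivariance throughout: one must verify that the center of rotation is positioned so that the three cube-orbits are genuinely disjoint (the axis passes through lattice vertices, not through cube interiors, so no cube is fixed by $R$, and indeed $P \cap K$, $R(P)\cap K$, $R^2(P)\cap K$ live in distinct cubes), so that the local modifications at vertices/edges/cubes can be performed independently on each orbit without clashing. A secondary point worth a sentence is that after rescaling the axis still meets the lattice in vertices (this is why one rescales by halving repeatedly rather than arbitrarily, exactly as remarked in the proof of Proposition \ref{prop: knotrotational}). Since the paper explicitly says this result ``is proven in the same way as proposition \ref{prop: knotrotational},'' I would keep the write-up short: state the alignment and rescaling, invoke the pathology-removal and center-substitution steps verbatim from there, emphasize only the equivariance-orbit point, and record that the resulting $R$ is encoded by a product of two disjoint $3$-cycles.
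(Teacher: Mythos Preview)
Your proposal is correct and follows essentially the same approach as the paper: both reduce to the argument of Proposition~\ref{prop: knotrotational} (align the axis with a body-diagonal of the cubic lattice, rescale by halving, clear vertex/edge pathologies equivariantly, then straighten each cube's arc to the center), with the only addition being the explicit choice of a reference cube on the axis to read off the permutation $\sigma$. Your write-up is in fact more detailed than the paper's (which is a two-sentence pointer back to Proposition~\ref{prop: knotrotational}); the one place to tighten is the notation near the end, where you use $g$ both for the full $R$-invariant closed gallery and for the single piece to be repeated---keep $g$ for the piece only, as in the statement.
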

\begin{proof}
    That such a decomposition onto three cubic paths which are obtained from one another by a threefold rotation on certain axis, which is also a symmetry of $C$, is done in the same way as in proposition $\ref{prop: knotrotational}$. Once this is achieved, one of the cubes with one of its vertices on its axis can be declared as the reference cube. Once this choice is taken, according to whether the rotation is positive or negative the appropriate permutation $\sigma$ is specified and thus the gallery can be described as
    \begin{equation*}
        (g, \sigma(g), \sigma^2(g)).
    \end{equation*}
\end{proof}
\begin{remark}
    We emphasize that for the static reference frame, when we are codifying galleries which produce knots with threefold symmetry, the first letter represents the direction in which the gallery moves from the \say{last piece} into the \say{first piece}. In particular, the length of the codifying word coincides with the length of the piece that is being rotated.
\end{remark}
 
\subsection{Codifying the cubic galleries in dynamic reference frame}\label{subsec: codifycubestnb}

The static reference frame is useful to describe individual pieces in the cubic lattice. However, our goal is to describe paths in $\tB$. What we would want is thus a translation from this cubic word into one in the generators of $\tB$. The problem that we face is that the three words that compose the cubic path are not really the same, they differ by the permutation $\sigma$ applied to its individual letters. 

In order to describe the cubic path in a way that it makes sense to concatenate three times the same word and get the knot we are looking for we must use a reference frame adjusted to the cube we are as opposed to using the one given by the euclidean space the live is in. This is the classical choice between a static reference frame versus a dynamic reference frame, which we call the TNB frame (in differential geometry,  it is called the Frenet-Serret Frame.) 

We now describe how we assign our TNB frame given a closed cubic path. We will suppose the path is oriented. The only thing that we have to do is to define the vector frame at an initial point and the tangent vector $T$ at each point of the curve. For the rest of the path, orthonormality and fixed orientation of the basis, determines the other vectors $N$ and $B$. We define the vectors at the initial point following the next convention.

\begin{convention}\label{conv: TNB description}
The initial $TNB$ frame is defined with respect to the first point of one of the pieces as follows:    
    \begin{description}
    \item[Tangent vector $T$:] The unitary vector that goes in the direction the path is oriented. 
    \item[Normal vector $N$:] The unitary vector, orhtogonal to $T$, that points in the direction of the first turn in the path. Such a turn must exist because the path closes.
    \item[Binormal vector $B$:] The unitary vector $B = T\times N$.
\end{description}
\end{convention}

Our convention to codify a path will be similar to the static reference frame, except that the notion of up, down, front, back, right and left depends on the TNB frame. 

\begin{convention}\label{conv: FUR in TNB}
 In a $TNB$-frame we declare \textbf{front} to be the direction of $T$, \textbf{up} to be the direction of $N$ and \textbf{right} to be the direction of $B$. 
 To emphasize that we are using the $TNB$ reference frame we use lower case letters to denote a path of cube in this direction.
\end{convention}

Because the curve follows a cubic path, each of the three vectors $T, N, B$ point to a face of the cube. Thus the $T, N, B$ vectors track how the cube moves as it follows the curve. This introduces a subtlety that we will explain by revisiting the example of the trefoil.

\begin{example}\label{ex: orderthree trefoil in cubes pt1}
We have once more the trefoil in the cubes with threefold symmetry. The curve is oriented in the direction of the red arrow as shown in figure \ref{fig: Cubic gallery of $3_1$ with threefold  symmetry.} above. We suppose we shall start our description of the path in the first red vertex. 
The $TNB$ frame shown is the starting one. Thus the description of the red path is
\begin{equation*}
    ffufruf.
\end{equation*}
Notice however that the description of the green path and the blue path is exactly the same because the choices of the starting TNB frame described above do not change under a threefold rotation. Thus at first we might be tempted to describe this trefoil, in the TNB frame as concatenating
\begin{equation*}
    {\color{red}ffufruf}{\color{green}ffufruf}{\color{blue}ffufruf}
\end{equation*}
This clearly does not represent the above figure because at the merging of the pieces, it instructs to keep the direction (i.e. it has $f$) three times, but that is not what the path does. When it starts following the green part, the path turns. 
\end{example}

The discrepancy we have just witnessed is introduced by the torsion, i.e. \textit{how much the cube twists as it follows the path}. At the end of the path we have a corresponding $TNB$ frame which has been produced by following the path. However, that last point of the path is where we will start following the rotated path and we can put an initial $TNB$ frame following the rules above. There is no reason why these two frames coincide, and as we see in the above example, they might not. 

We thus define a linear transformation  $\tau:\mathbb{R}^3\longrightarrow \mathbb{R}^3$ as the one that sends the final $TNB$ frame, by following the first path, to the initial TNB path, constructed by the rules above with respect to the rotated path.  We emphasize that this $\tau$ is independent on which of the three rotated paths we follow, because what it measures the torsion of the path, which is invariant under rotations.

\begin{proposition}
   The linear transformation $\tau$ is a positive rotation (i.e. $\tau\in\mbox{SO}(3, \mathbb{R})$). 
\end{proposition}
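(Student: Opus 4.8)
The plan is to show that $\tau$ is an orthogonal transformation with determinant $+1$, and then to argue separately that it cannot be orientation-reversing. First I would record that $\tau$ is by construction the change-of-basis matrix between two orthonormal frames of $\mathbb{R}^3$: the frame $(T_{\mathrm{end}}, N_{\mathrm{end}}, B_{\mathrm{end}})$ obtained by parallel-transporting the Frenet data along the first piece to its terminal point, and the frame $(T'_{\mathrm{start}}, N'_{\mathrm{start}}, B'_{\mathrm{start}})$ assigned by Convention \ref{conv: TNB description} to the rotated piece at that same point. A linear map sending one orthonormal basis to another is automatically in $\mathrm{O}(3,\mathbb{R})$, so the only content is ruling out $\det \tau = -1$.

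To pin down the determinant, I would track orientations. Both frames are \emph{positively oriented}: the transported frame along the first piece starts positively oriented (since $B = T\times N$ by Convention \ref{conv: FUR in TNB}) and remains so because the transport is a continuous rotation (it never degenerates, as $T,N,B$ stay orthonormal along a cubic path), and the new frame $(T',N',B')$ is positively oriented for the very same reason that Convention \ref{conv: TNB description} builds in ($B' = T'\times N'$). A linear map carrying a positively oriented orthonormal basis to a positively oriented orthonormal basis has determinant $+1$, hence $\tau \in \mathrm{SO}(3,\mathbb{R})$.

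The step I expect to require the most care is justifying that both frames genuinely have the \emph{same} handedness, i.e. that no orientation is lost either in the transport along the first piece or in the passage to the rotated piece. For the transport: the frame evolves by a sequence of quarter-turns as the cube rolls from face to face, and each such elementary move is an element of $\mathrm{SO}(3,\mathbb{R})$, so the composite lies in $\mathrm{SO}(3,\mathbb{R})$ and preserves orientation; one should check that the $T,N,B$ bookkeeping across a corner is exactly such a quarter-turn and never a reflection. For the rotated piece: the threefold rotation $R$ used to produce the knot lies in $\mathrm{SO}(3,\mathbb{R})$ (this is the $120^\circ$ rotation about a cube diagonal, as in Proposition \ref{prop: cubesrotation}), so it carries the first piece's initial positively oriented Frenet frame to a positively oriented frame; and since $\tau$ depends only on the torsion of the path — an $\mathrm{SO}(3,\mathbb{R})$-invariant, as noted just before the statement — the orientation of the frame it produces is unaffected. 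Assembling these observations gives $\tau \in \mathrm{SO}(3,\mathbb{R})$, which is the claim.
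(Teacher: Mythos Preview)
Your proposal is correct and takes essentially the same approach as the paper: both the final $TNB$ frame obtained by following the first piece and the initial $TNB$ frame of the rotated piece are positively oriented orthonormal bases (by the convention $B = T\times N$), so the linear map sending one to the other lies in $\mathrm{SO}(3,\mathbb{R})$. The paper's proof is a one-sentence version of this; your additional justification for why the transported frame stays right-handed (via composition of quarter-turns in $\mathrm{SO}(3,\mathbb{R})$) is more explicit than what the paper provides but not a departure from its logic.
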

\begin{proof}
    The linear transformation $\tau$ sends a positve orthonormal basis, the final TNB basis following the first path, to another positive basis, the TNB basis constructed from the rules above. Thus it is an element of $SO(3, \mathbb{R})$. 
\end{proof}

Notice that it cannot be that only one vector is misplaced. Indeed, if two of them are correct (i.e. fixed by $\tau$) then the third one is also fixed because it is the cross product of the other two. Thus, either only one vector is fixed or none are. 

\begin{definition}
We say that $\tau$ is a \textbf{bar gluing} if $\tau(T) = T$ and a \textbf{corner gluing} if $\tau(T)\neq T$.
\end{definition}
\begin{remark}
These names are suggestive of how the three pieces glue. In a bar gluing, because $T$ is correct, the end of one piece merges with the start of the next to produce one straight bar of the cubic gallery of the knot. On the other case, the end of one piece glues with the next one creating a turn of the cubic gallery. 
\end{remark}

\begin{example}\label{ex: orderthree trefoil in cubes pt2}
Continuing the previous example, we see by inspection that comparing the two $TNB$ in the last red vertex, we see that $N$ is already pointing in the right direction. $T$ and $B$ must be rotated by $90$ degrees around the axis of $N$ in the positive direction. That is:
\begin{equation*}
    \tau(T) = -B, \tau(N) = N, \tau(B)= T.
\end{equation*}
This is a corner gluing. Indeed, looking at the figure \ref{fig: Cubic gallery of $3_1$ with threefold symmetry.}, we see that different pieces create corners.
\end{example}

\subsection{Translating from $C$ to the $\tB$}\label{subsec: translating}

Finally, we can produce words in $\tB$. We will suppose given a knot with threefold symmetry of which we know a cubic word, given by a gallery in TNB frame, together with the corresponding $\tau:\mathbb{R}^3\longrightarrow \mathbb{R}^3$.

What we do is to replace each $f, b, u, d, r, l$ by an appropriate gallery of pyramids, that is a word in $A, B, C, D$, that mimics the movement given by the $TNB$ frame. Then, if needed, we also add a words, also in the letters $A,B,C,D$, to perform the correction given by $\tau$. 

We bring attention to the following point, which will be very relevant to our later codification:
\begin{description}
    \item[Cube parity:] When we are dealing with cubic galleries, all cubes are the same. However, when we work with $\tB$, the cubes of which this tesselation comes from have a parity attached to them. That is, not all of them are translations of each other. The difference resides in the alternating pattern of the vertices (of the cubes) labeled $A$ and $B$. Each cube is adjacent to six other cubes of the opposite parity. \textit{We shall suppose we have decided on some fixed parity for the cubes as explained above.}
\end{description}

Let $K$ be a knot with threefold symmetry in the cubic lattice and suppose $\tau$ is a bar-gluing. Then, because $T$ is correct, when we conclude following the first piece, the cube that contains the first cube of the following piece is in front of it.  However, the direction of the piece is indicated wrong, because the first turn goes in the $N$ direction which is, by assumption, wrong. However,this can be fixed by rotating the $TNB$ frame, on the $T$ axis by a positive rotation by $90$, a rotation by $180$ or a negative rotation by $90$.

Thus, we can codify the way we glue consecutive pieces in the case of a bar gluing by correcting the problem introduced by the torsion and then applying $f$.
   
We are ready to give the translation between $TNB$ and $\tB$. We begin with a definition:
\begin{definition}
    Let $K$ be a knot with threefold symmetry given by a path of cubes of length $3k$ with threefold rotational symmetry. Take a subpath of length $k$ and use it to construct the gallery $g = (c_1,..., c_{k})$ and rotation $\tau:\mathbb{R}^3\longrightarrow\mathbb{R}^3$. Suppose $\tau$ is a bar gluing. 
    We define the \textbf{type of $\tau$}, denoted by $s$ or $s(\tau)$, by the following table
    \begin{center}
    \begin{tabular}{|c|c|}
    \hline
       \textbf{Bar Gluing $\tau$} & $s$  \\
    \hline
       No rotation needed  & $1$\\
    \hline
        Positive 90 degrees & $i$\\
    \hline
         180 degrees & $-1$\\
    \hline
          Negative 90 degrees& $-i$\\
    \hline
    \end{tabular}
\end{center}
That is, identifying the $NB$ plane with the complex plane, $s(\tau)$ is the complex number corresponding to the restriction to the $NB$-plane of the inverse of $\tau$.
\end{definition}

Finally, the translation is given as follows
\begin{theorem}\label{theorem: piece change from cubes to pyramid}
Let $K$ be a knot with threefold symmetry and let $g = (c_1, c_2,...., c_k)$, where $c_i\in\{f, u, d, r, l\}$, be a $TNB$-codification of the cubic piece to be rotated (notice $b$ does not appear because a path doesn't go back on itself).
 Let $\tau:\mathbb{R}^3\rightarrow\mathbb{R}^3$ be the corresponding rotation and suppose it is a bar gluing of type $s$. Construct a gallery in $\tB$ as
\begin{equation*}
    T(g):= (T(c_1), T(c_2),..., T(c_k), T(s)),
\end{equation*}
where $T$ is defined in $f, u, d, r, l$ as given by the following table
\begin{center}
    \begin{tabular}{|c|c|c|}
    \hline
       \mbox{Letter} & \mbox{Even cube} & \mbox{Odd cube} \\
    \hline
        $f$& \multicolumn{2}{|c|}{$ABCDC$}  \\
    \hline
        $u$& \multicolumn{2}{|c|}{$ABCABCDC$}  \\
    \hline
        $d$& \multicolumn{2}{|c|}{$DC$}  \\
    \hline
        $l$& $ACDC$& $BCDC$ \\
    \hline
        $r$& $BCDC$& $ACDC$ \\
    \hline
    \end{tabular}
\end{center}
and $T(s)$ is defined by the following table
\begin{center}
    \begin{tabular}{|c|c|}
    \hline
       \textbf{s} & $T(s)$  \\
    \hline
       $1$  & $ABCDC$\\
    \hline
         $i$ & $ACBCDC$\\
    \hline
          $-1$ & $BACABDC$\\
    \hline
         $-i$ & $BCACDC$\\
    \hline
    \end{tabular}
\end{center}
Then the gallery in $\tB$ given by
\begin{equation*}
    (T(g), T(g), T(g))
\end{equation*}
knots as $K$. In particular, $T(g)$ is a word of order $3$ that represents $K$.
\end{theorem}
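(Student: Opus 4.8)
The plan is to verify that the local replacement rule $T$ faithfully translates each unit cube-step of the TNB-codified piece into a gallery of pyramids in $\tB$ that (a) realizes the same geometric motion and (b) correctly tracks the two pieces of bookkeeping that distinguish $\tB$ from the plain cubic lattice, namely the cube parity and the TNB frame. Once this is established for a single step, concatenating the translated steps along the whole piece gives a gallery in $\tB$ whose underlying polygonal curve is a refinement of the cubic curve of the piece, and the torsion correction $T(s)$ at the end makes the three translated copies glue the same way the three cubic pieces glue. By Proposition \ref{prop: cubesrotation} the concatenation of the three cubic pieces knots as $K$, so it suffices to check that passing from the cubic curve to the pyramidal curve does not change the knot type, which is true because each replacement keeps the new subpath inside the star of the cube (or of the shared face) it is refining.

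The first step is to fix, once and for all, a reference pyramid $\cC$ and the induced generators $A,B,C,D$ as in Section \ref{subsec: construction of tess and gal}, and to record, for a cube of the chosen parity, which pyramid inside it corresponds to which position of the TNB frame; concretely one wants the pyramid whose faces ``point'' along $T$ (front), $N$ (up), $B$ (right) in the sense of Convention \ref{conv: FUR in TNB}. The second step is the heart of the matter: for each letter $c\in\{f,u,d,r,l\}$ one checks, by direct computation in $\tB$ (using the presentation in Section \ref{sec: Irreducible Coxeter Complexes of rank 4} and Proposition \ref{prop: coherent codification}), that the word $T(c)$ listed in the first table is a gallery that starts at the pyramid associated to the current cube-and-frame, ends at the pyramid associated to the cube-and-frame after executing step $c$, and that its geometric chamber sequence stays within the union of the two cubes involved (so the straight-segment curve it traces out is isotopic, rel endpoints, to the cubic segment it replaces). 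One also checks here that $T(c)$ changes the cube parity exactly once (this is visible from the single appearance of $D$ up to the initial $D$ in $d$), matching the fact that each cubic step crosses one cube face. The $l$/$r$ rows depend on parity precisely because the vertices labeled $A$ and $B$ alternate between cubes of opposite parity, so the face one must cross to go ``right'' in an even cube is the ``$A$-face'' and in an odd cube the ``$B$-face''; this is the only place parity enters.

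The third step handles the gluing. By definition $\tau$ is the rotation carrying the final TNB frame of the first piece to the initial TNB frame of the (rotated) second piece, and since we are in the bar-gluing case $\tau$ fixes $T$, so $\tau^{-1}$ restricted to the $NB$-plane is a rotation by $0,90,180$ or $-90$ degrees, encoded by $s\in\{1,i,-1,-i\}$. One then checks that the word $T(s)$ in the second table is a gallery that performs exactly this frame correction and then a forward step $f$, landing in the pyramid of the second piece's initial cube-and-frame; this is again a finite computation in $\tB$ using the presentation, and here one uses that $T(s)$ for $s=1$ is literally $T(f)=ABCDC$, as it must be since no correction is needed. Finally, since $\tau$ measures torsion, which is rotation-invariant, the same $\tau$ (hence the same $T(s)$) glues the second piece to the third and the third back to the first, so $(T(g),T(g),T(g))$ is a closed gallery; and $w=T(c_1)\cdots T(c_k)T(s)$ is a symmetry of $\tB$ with $w^3=1$ by Proposition \ref{prop: knotrotational}, i.e. $T(g)$ is a word of order $3$ representing $K$.

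The main obstacle I expect is the case-by-case verification in the second step, i.e.\ confirming that the five short words in the first table (with the parity-dependent $l,r$) and the four words in the $T(s)$ table really do implement the claimed motions; this is not deep but is fiddly, because one must keep simultaneous track of three things per step — which pyramid one sits in, how the TNB frame sits relative to that pyramid, and the cube parity — and the coloring asymmetry of $\tB$ (noted after the tesselation descriptions: cubes come in two reflection-related colorings) means the bookkeeping is genuinely different in even versus odd cubes. A secondary, more conceptual point to get right is the isotopy claim: one must argue that replacing a cubic segment by the polygonal path through centers of the relevant pyramids does not create or destroy crossings, which follows because each $T(c)$-subpath lies in the convex region spanned by the two cubes it joins and those regions, traversed in order along the piece, form a tube deformation-retracting onto the original cubic curve.
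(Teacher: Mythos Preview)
Your proposal is correct and follows essentially the same approach as the paper: fix a reference pyramid encoding the TNB frame, verify that each $T(c)$ carries it to the correctly-framed pyramid in the adjacent cube, verify that $T(s)$ performs the in-cube frame correction followed by an $f$-step, and conclude that the triple concatenation closes and is isotopic to the cubic knot. The paper makes explicit two points you leave as ``fiddly verification'': it names a specific \emph{pivot pyramid} (the one with its $ABC$-face on the $-N$ face of the cube and its $AB$-edge on the back face) as the carrier of the frame, and it derives the $T(s)$ entries by factoring them as (in-cube rotation)$\cdot(ABCDC)$, e.g.\ $(ACA)(ABCDC)=ACBCDC$, together with the observation that the last cube of each piece is necessarily odd, which pins down the parity in that factorization.
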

\begin{proof}
   Let us pick a cube where we will start reading the $TNB$ frame. We emphasize that we are not given the cubic gallery itself but rather a codification in $TNB$ frame with bar gluing $\tau$. Thus, we are free to choose the directions which will be $T$ and $N$ in this first cube.
   
   Without loss of generality we will suppose this cube is even (recall this means a fixed choice of which vertices of the cube are $A$ and which are $B$). Flattening this cube we pick a pyramid whose $ABC$-face lies on the cube face where $-N$ is pointing and whose $AB$ edge lies on the face where $-T$ is pointing (i.e. the back face). We call this pyramid the \textit{pivot pyramid}.
   
   The five words given by $f, u, d, l, r$ end with $DC$. Without these last two letters, the pyramid moves withing the same cube. If we flatten the cube the final \say{destinations} of $\mathcal{C}$ along these paths are as shown in the following picture. Notice that the location of $\mathcal{C}$ coincides with $d$.
    \begin{center}
    \includegraphics[scale = 0.5]{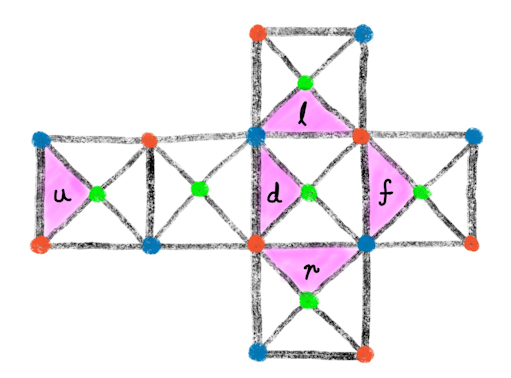}
    \captionof{figure}{Locations of the pivot pyramid before it changes cube with $DC$.}
    \end{center}
   Notice that the cube where each final pyramid, after adding the $DC$ in each word, lies in a different cube and satisfies the following two properties:
   \begin{enumerate}
       \item The cube where each letter $m\in\{f, u, d, r, l\}$ indicates in the $TNB$ movement coincides with the cube to which the pivot pyramid moved within the corresponding gallery.
       \item Taking into account the change in $TNB$ cube after the first move, the pivot pyramid $\mathcal{C}$ moved to the pyramid which is the pivot pyramid of the next cube.
   \end{enumerate}
   We show this in the next image except for $u, f, r, l$. (For $d$ the image is less clear because the face lies in a face between the cubes but the argument is the same).
    \begin{center}
    \includegraphics[scale = 1]{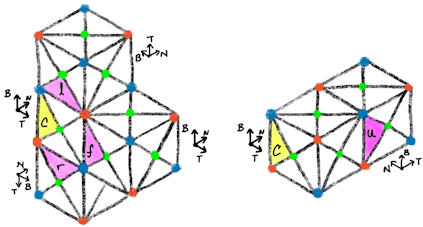}
    \captionof{figure}{The pivot pyramid changes in the same was as the $TNB$ frame.}
    \end{center}
   The above argument is for a change from an even to an odd cube. The other possibility is analogous. Thus, by performing the steps one after the other we follow with a path of pyramids the cubes described by the codification given by the dynamic frame. 
   
   Now we handle the cubes where the pieces glue. By assumption, $\tau$ is a bar gluing. This means that in this last cube, the pyramid has its $AB$ edge and an edge of the back face. However, it might be that its $ABC$ face does not point in the $-N$ direction. Thus, we move \textit{within} this cube the pivot pyramid. There are four possible pivot pyramids for this correction, including the case where no fix is required.
    \begin{center}
    \includegraphics[scale = 1]{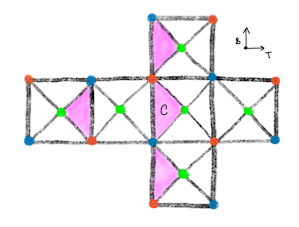}
    \captionof{figure}{Possible locations for the new pivot pyramid.}
    \end{center}
   
   Once repositioned, because it is a bar gluing, and we have fixed $\tau$ we perform $f$. Composing the words given by fixing $\tau$ with the word for $f$ we get:
   \begin{align*}
        (ACA)(ABCDC) &= ACBCDC,\\
       (BCBACA)(ABCDC) &= BACABDC,\\ 
        (BCB)(ABCDC) &= BCACDC.
   \end{align*}
   Notice that the last cube of any piece of a threefold rotationally symmetric knot must be odd. This is why $BCB$ is the positive rotation and $ACA$ the negative rotation. 
   
   Performing this operation we move to the first cube of the new piece matching the $TNB$ it must have. In particular, the pivot pyramid has moved to coincide with the appropriate pivot pyramid of the next rotated piece. Now we can repeat the same process twice and at the end it will close because the pivot pyramid in the first cube is determined by the starting $TNB$ frame.

    By construction, the gallery $(T(g), T(g), T(g))$ closes and thus this is a word of order three. Furthermore, the knot it represents is $K$ because one can deform the path created by joining centers of pyramids to that of the centers of cubes one cube at a time. This is similar to the argument in Proposition \ref{prop: knotrotational}. We leave the details to the reader.
    
    This concludes the proof.
\end{proof}

For all practical purposes, we do not lose generality by restricting our constructions to bar-gluing. Indeed, suppose in $\tB$ we have a word $w$ of order three represented by a gallery
\begin{equation*}
    (s_1,..., s_m, t_1,..., t_n).
\end{equation*}
Let $w = s_1...s_mt_1...t_n$. We can \textit{shift} the gallery that will be rotated to construct the knot. Concretely, this will change the shape of the piece, but the final gallery that constitutes the complete knot doesn't change. This can always be achieved because if
\begin{equation}
    s = s_1...s_m, t = t_1...t_n,
\end{equation}
then from $w^3 = 1$, we obtain
\begin{equation*}
    ststst = 1,
\end{equation*}
and thus conjugating by $s$ one gets
\begin{equation*}
    s^{-1}(ststst)s = 1
\end{equation*}
that is, 
\begin{equation*}
    tststs = 1.
\end{equation*}
Thus, the new piece is given by the gallery $(t_1,..., t_n, s_1,..., s_m)$, and the new starting chamber is $s_1...s_m(\mathcal{C})$. 

Similar arguments can be made for the cubic lattice. In particular, if one is interested in working with a bar-gluing one can shift the piece to start in the middle of a straight bar as opposed to a corner. In the process of construction that we shall follow this can also be guaranteed. Furthermore, one can also artificially enlarge the length of the bars by reducing the scale of the cubes (as done in Proposition \ref{prop: knotrotational}, for example).

In conclusion, we shall shift the cubic gallery to get a bar gluing, translate into $\tB$ and then unshift in the Coxeter word to follow the original cubic piece.

\subsection{Comparing lattices}\label{subsec: comparisonlattices}

We will end this section by briefly discussing certain bounds on the length in $\tB$ and in the cubic lattice $C$.  

\begin{proposition}
    Let $g$ be a gallery in $\tB$ that comes from a cubic gallery $g_C$ in $C$, as explained in subsection \ref{subsec: translating}. Then
    \begin{equation*}
        l_{\tB}(g) \ge 2l_C(g_C)
    \end{equation*}
\end{proposition}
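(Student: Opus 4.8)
The plan is to track how each elementary cube‑step of $g_C$ is expanded by the translation $T$ of Theorem \ref{theorem: piece change from cubes to pyramid} and count letters. Recall that $g_C$ consists of $l_C(g_C)$ cube‑moves (its length is the number of cubes visited minus $1$, or equivalently the number of face‑crossings, depending on the convention we fixed), and that $g$ is built by replacing each of these moves by one of the blocks $f\mapsto ABCDC$, $u\mapsto ABCABCDC$, $d\mapsto DC$, $l\mapsto ACDC$ or $BCDC$, $r\mapsto BCDC$ or $ACDC$, and finally appending one torsion‑correction block $T(s)$ of length $5$, $6$, $7$ or $6$. The key observation is a \emph{per‑step} lower bound: a single cube‑move in $C$ corresponds to crossing from one cube to an adjacent one, and crossing a cube wall in $\tB$ requires passing through the pyramid faces meeting that wall; concretely, every block in the table contains at least two letters, and — more to the point — the only way to change cube in $\tB$ is to use the letter $D$ (the generator that swaps cubes), which appears an even number of times overall and at least... here one must be careful, since $d\mapsto DC$ has just two letters.

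So the counting I would actually carry out is the crude but sufficient one: each of the five blocks in the substitution table has length $\ge 2$, and in fact the lengths are $5,8,2,4,4$ for $f,u,d,l,r$ respectively, so each cube‑step contributes at least $2$ letters to $g$; moreover $T(g)$ carries one extra block $T(s)$ of length $\ge 5$, and the full gallery repeats $T(g)$ three times. If $g_C$ has length $L = l_C(g_C)$, then the length of one copy of $T(g)$ is at least $2L + 5 \ge 2L$, hence the length of $g = (T(g),T(g),T(g))$ is at least $3(2L)$; but wait — this overshoots, so the intended statement must be comparing $l_{\tB}(g)$ with $l_C(g_C)$ where \emph{both} already denote the full closed galleries (three pieces each). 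First I would pin down that reading: with $g$ the full $\tB$‑gallery of the knot and $g_C$ the full cubic gallery of the same knot, we have $l_C(g_C) = 3L$ (three pieces of length $L$) and $l_{\tB}(g) = 3(2L + |T(s)|)$, so $l_{\tB}(g) = 2 l_C(g_C) + 3|T(s)| \ge 2 l_C(g_C)$, which is exactly the claimed inequality.

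Therefore the proof reduces to two checks. First, verify from the table in Theorem \ref{theorem: piece change from cubes to pyramid} that every substitution block for $f,u,d,r,l$ has length at least $2$; equivalently that $|T(c_i)| \ge 2$ for each $i$, so that the piece $T(g)$ without its gluing block already has length $\ge 2L$. Second, observe that the gluing block $T(s)$ is always nonempty (it has length $5$, $6$ or $7$ from the $T(s)$ table), so $|T(g)| \ge 2L + 1 > 2L$, and since the closed gallery is three concatenated copies, $l_{\tB}(g) = 3|T(g)| \ge 3\cdot 2L = 2\cdot 3L = 2 l_C(g_C)$. One should also note that $l_{\tB}(g)$ is the length of this particular gallery, not the minimal stick number $l_{\tB}(K)$, so no minimization argument is needed — the inequality is literally about the two galleries related by the translation procedure.

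The main obstacle I anticipate is \emph{bookkeeping about conventions}: the paper is slightly loose about whether ``length of a gallery'' counts pyramids, face‑crossings, or generators, and about whether $g$ and $g_C$ denote single pieces or the thrice‑repeated closed loops. The inequality $l_{\tB}(g) \ge 2 l_C(g_C)$ only comes out cleanly under the reading that both are the full closed galleries and that length is counted as number of generators (equivalently face‑crossings); under a per‑piece reading one instead gets $l_{\tB}(T(g)) = 2 l_C(g) + |T(s)|$, which is the sharper and more informative statement and from which the displayed inequality follows immediately. I would state explicitly which convention is in force at the start of the proof, then the argument is just the two length checks above plus the remark that $|T(s)| \ge 0$. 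No genuinely hard step remains once the conventions are fixed; everything else is reading the two tables.
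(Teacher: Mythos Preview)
Your block-counting argument is correct once the conventions are pinned down (and you are right that this is the only real obstacle), but the paper takes a different and more conceptual route---one you actually glanced at and then set aside. The paper counts $N_D(g)$, the number of occurrences of the generator $D$ in the word for $g$. Since $D$ is the unique generator that crosses a cube wall and the translated gallery passes through each cube exactly once, $N_D(g) = l_C(g_C)$. Because two $D$'s cannot be consecutive (that would be a backtrack), there are at least $N_D(g)-1$ non-$D$ letters interleaved, giving $l_{\tB}(g) \ge 2N_D(g) - 1 = 2l_C(g_C) - 1$. Finally, a closed gallery represents the identity as a product of reflections and so must have even length, which bumps the bound to $l_{\tB}(g) \ge 2l_C(g_C)$.

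The paper's argument is table-independent: it applies to \emph{any} $\tB$-gallery that tracks a cubic path without revisiting cubes, not just those produced by the specific substitution of Theorem~\ref{theorem: piece change from cubes to pyramid}. Your approach, by contrast, needs the explicit block lengths $5,8,2,4,4$ and $|T(s)|\ge 5$ read off the tables, and forces you to sort out whether $g$ and $g_C$ denote pieces or full loops. On the other hand, your method immediately gives much sharper bounds whenever the cubic path has few $d$-moves, since most blocks contribute $4$ or more letters rather than $2$.
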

\begin{proof}
 For a gallery 
\begin{equation*}
    g = (s_1,. s_2,..., s_k)
\end{equation*}
let $N_D(g)$ be the number of indices $1\le i\le k$ such that $s_i = D$. We have discussed before that $D$ represents the change between cubic faces. Thus if $g$ never visits any cube more than twice (i.e. it goes in and out of each cube exactly once), then the cubic gallery created by joining consecutive centers of cubes visited by $g$, which is a gallery in $C$, has length $N_D(g)$. Applying this to $g_C$, we conclude that
\begin{equation*}
    N_D(g) = l_C(g_C).
\end{equation*}
Because we must not have two $D$'s in a row, there must be at least one different letter in between two consecutive $D$. Thus $g$ actually has $N_D(g) - 1$ separating the $D's$. We conclude,
\begin{equation*}
    l_{\tB}(g)\ge 2l_C(g_C) - 1.
\end{equation*}
However, this gallery closes because it represents a knot, its length must be even. Thus,
\begin{equation*}
    l_{\tB}(g)\ge 2l_C(g_C),
\end{equation*}
as desired.
\end{proof}

\section{Order Three Galleries}\label{sec: Order three galleries}

\subsection{Example of the construction for $9_{47}$}

As the exhaustive searches explained in section \ref{sec: knotting length in the Coxeter Lattices} show, the amount of galleries that must be studied to find specific knots in $\tX$ is very large. However, if we look for knots that admit certain symmetries then the lists are smaller. Furthermore, these symmetries can be used to \textit{construct} the words we are looking for as opposed to find for them. Usually, a symmetry that a knot may have is not visible from its knot diagrams. Even when the knot diagram shows this symmetry, it is unclear how to produce an actual physical knot with that symmetry from it.

However, we can nevertheless use these to produce larger knots with threefold symmetries. We will explain the process with $9_{47}$. A plane representation of this knot that shows the threefold symmetry is the following:
\begin{center}
    \includegraphics[scale = 0.3]{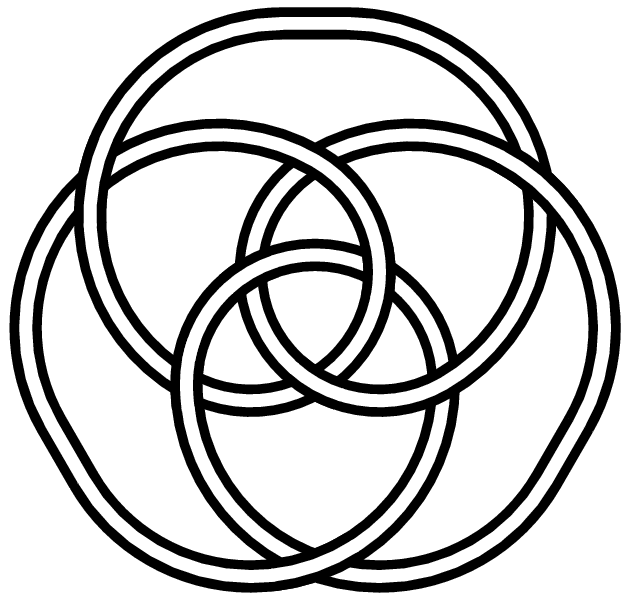}
    \captionof{figure}{$9_{47}$ diagram with rings and threefold symmetry.}
\end{center}
The crossings are arranged in rings around the center of the threefold symmetry. The figure in the central circle as well as those in each annulus is preserved by the rotation.
\begin{center}
    \includegraphics[scale = 0.3]{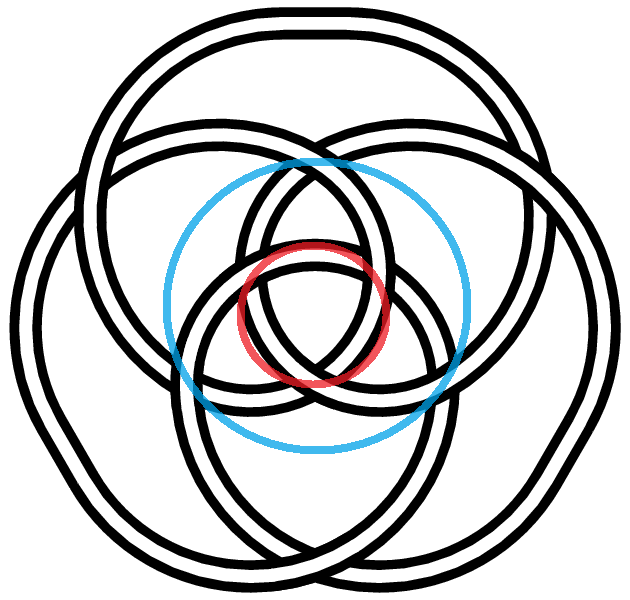}
    \captionof{figure}{$9_{47}$ diagram with threefold symmetry.}
\end{center}
There are three pieces: the inner circle, the annulus and the outside. Using those, we see that it can be constructed inductively as follows.
\begin{center}
    \includegraphics[scale = 0.5]{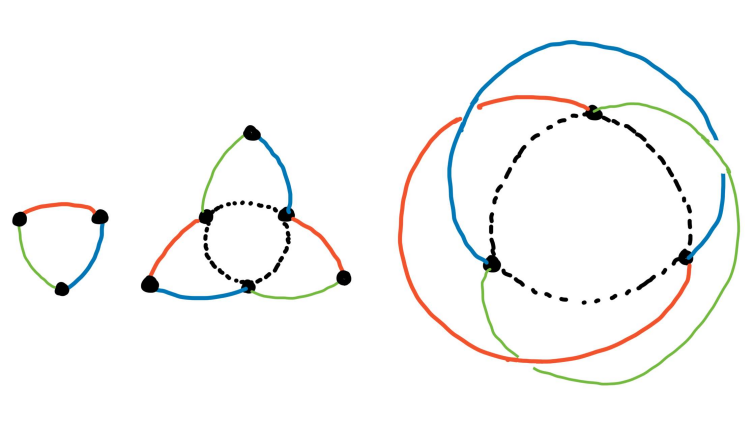}
    \captionof{figure}{Different parts of the $9_{47}$ according to ring.}
\end{center}
Our strategy will succeed if we can do the following two things:
\begin{enumerate}
    \item Construct each figure with cubes in such a way that the rotational symmetry is preserved in the same axis. 
    \item Merge different stages together by preserving the top and under crossings. 
\end{enumerate}
This works as long as the smallest piece can be constructed as the base case. In this situation, this can be done and it consists of three strings.
\begin{center}
    \includegraphics[scale = 0.5]{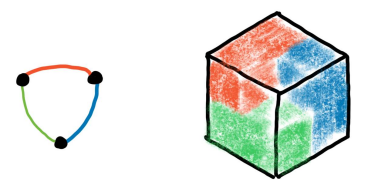}
    \captionof{figure}{First stage of $9_{47}$ and its cube.}
\end{center}
This can be done with a $2 \times 2 \times 2$ cube and it preserves the threefold symmetry. Furthermore, the upper and lower crossings at the first circle are preserved. 

For the next stage, we forget what is inside this box. We just keep the direction at which different strings must go out of the box. Then we create the required tangle (in this case it's the trivial blue-red one) and rotate 
\begin{center}
    \includegraphics[scale = 0.5]{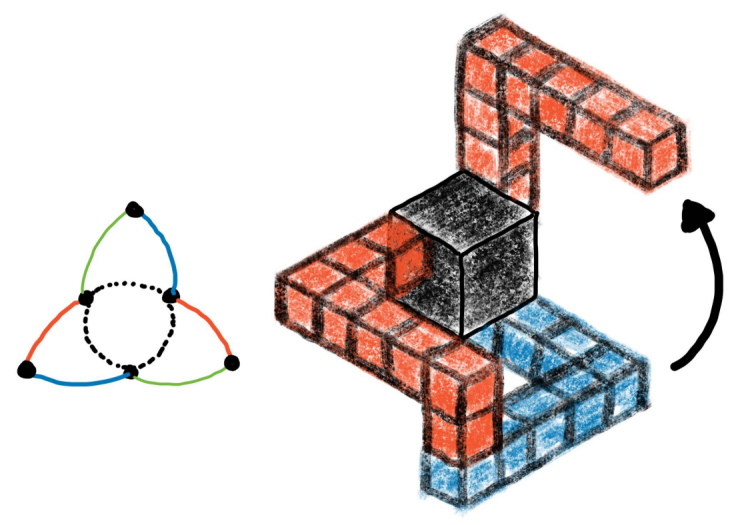}
    \captionof{figure}{Second stage of $9_{47}$ over the first stage cube.}
\end{center}
Because the new strings go into different disjoint regions of the cube the rotational symmetry is possible without problem. Notice how red is on top of blue, as it is required from the diagram. Moreover, because the central cube must be at the center of the next one, so that they share the axis of symmetry, we see that the new cube must extend three cubes in each direction. So it is an $8\times 8\times 8$ cube.

For the last stage, we forget what is inside this black box, except where the pieces go outside. This time the strings do interact with each other so we must be mindful of not crashing when rotating and of respecting crossings.
\begin{center}
    \includegraphics[scale = 0.5]{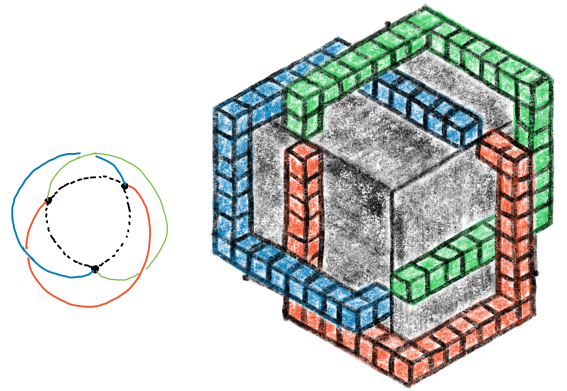}
    \captionof{figure}{Third stage of $9_{47}$ over the second stage cube.}
\end{center}
Notice that we rotated the last image so that it's easier to see that  the crossings and colors are respected. The farthest we moved away from each side was $2$, thus the final figure lies inside a $12\times 12 \times 12$ cube.

By connecting together the different pieces of the same color we reach the piece that when rotated produces the knot we want. For example, connecting the red parts gives the following figure.
\begin{center}
    \includegraphics[scale = 0.5]{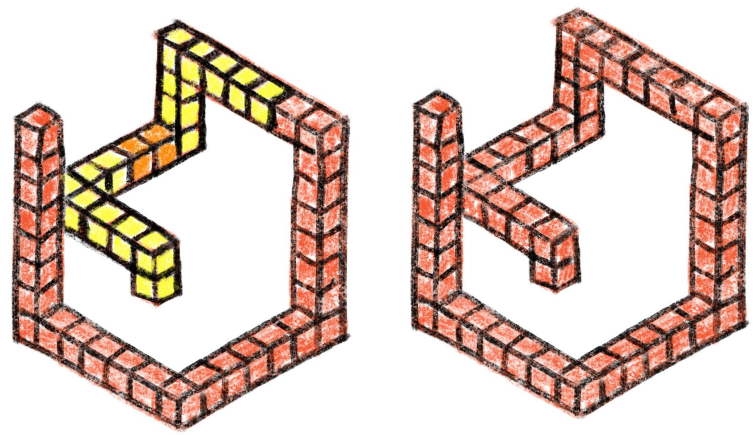}
    \captionof{figure}{Piece that produces $9_{47}$ colored by stage (left) and without distinction (right).}
    \label{fig: piece of 9_47}
\end{center}
Now, using the threefold rotation of the $12\times 12 \times 12$ we have been using to make sure these pieces indeed rotate appropriately we can build the knot with the three pieces. 
\begin{center}
    \includegraphics[scale = 0.5]{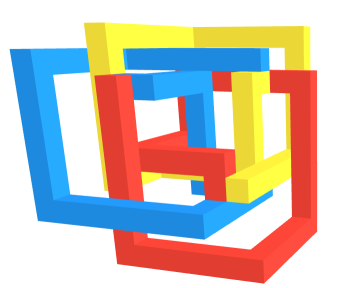}
    \captionof{figure}{The three pieces that create $9_{47}$}
    \label{fig: 9_47 constructed}
\end{center}
We now shift the pieces to assure $\tau$ is a bar gluing. We do it as shown in the following picture
\begin{center}
    \includegraphics[scale = 0.2]{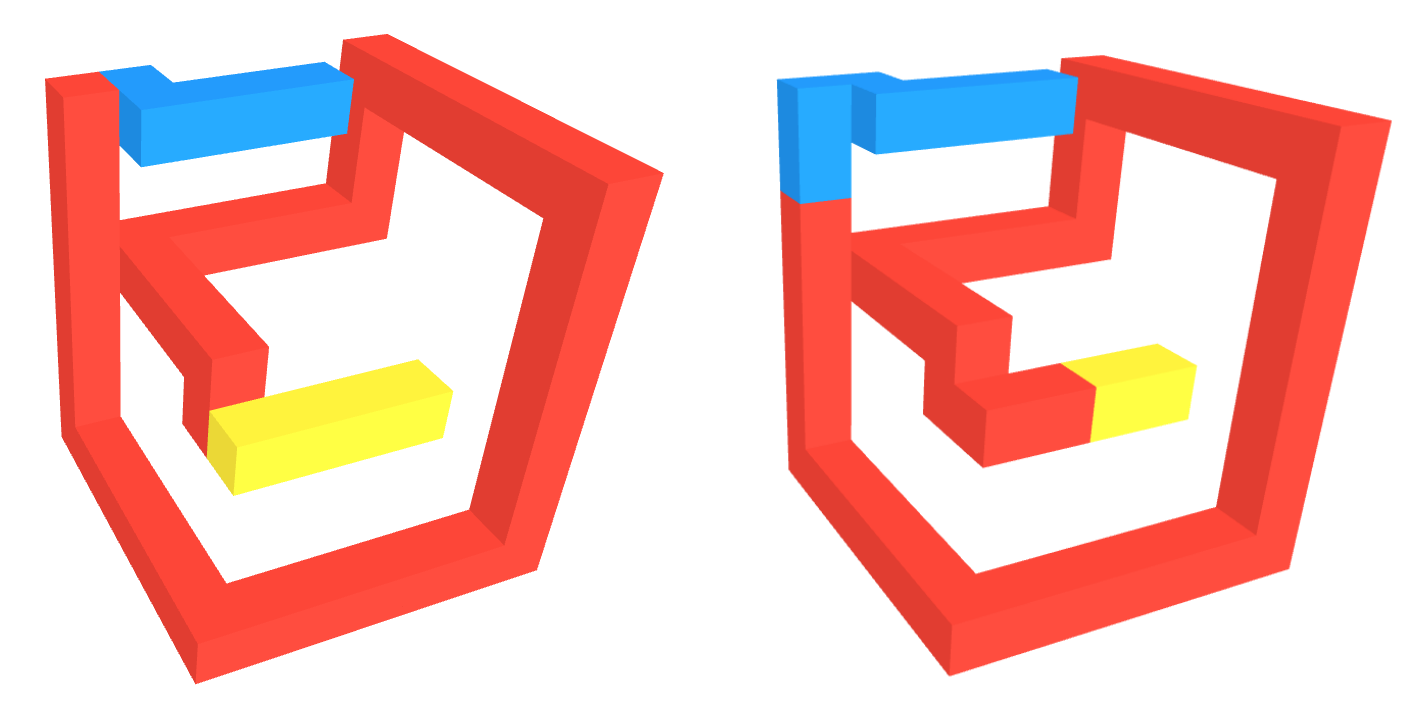}
    \captionof{figure}{The new piece to be rotated}
    \label{fig: 9_47 new piece}
\end{center}
Following the direction of our new shifted piece from yellow to red, we see that the $TNB$ frame path is
\begin{equation*}
    fulrfffufffflffufffffuffffffflfffffuffffflffffff
\end{equation*}
Comparing with the rotated piece that follows (i.e. the blue one) the rotation $\tau$ is the identity. This leads to the following word of order three, using Theorem \ref{theorem: piece change from cubes to pyramid} in the $\tB$ Coxeter Complex. 
\begin{align*}
\;ABCDCABCABCDCBCDCACDCABCDCABCDCABCDCABCABCDCABCDCA\\
\;BCDCABCDCABCDCACDCABCDCABCDCABCABCDCABCDCABCDCABCD\\
\;CABCDCABCDCABCABCDCABCDCABCDCABCDCABCDCABCDCABCDCA\\
\;BCDCACDCABCDCABCDCABCDCABCDCABCDCABCABCDCABCDCABCD\\
\;CABCDCABCDCABCDCACDCABCDCABCDCABCDCABCDCABCDCABCDC
\end{align*}

We can now undo the shifting so that this $\tB$ piece follows exactly the cubic pieces. Doing this leads to a word of order three given in part (4) of theorem \ref{thm: words for order three for cross 9} below.
\subsection{Prime knots of $9$ crossings with threefold symmetry}

The other prime knots with threefold symmetry of $9$ crossings are $9_{35}$, $9_{40}$ and $9_{41}$. Their respective diagrams with threefold symmetries are given below:
\begin{center}
    \includegraphics[scale = 0.3]{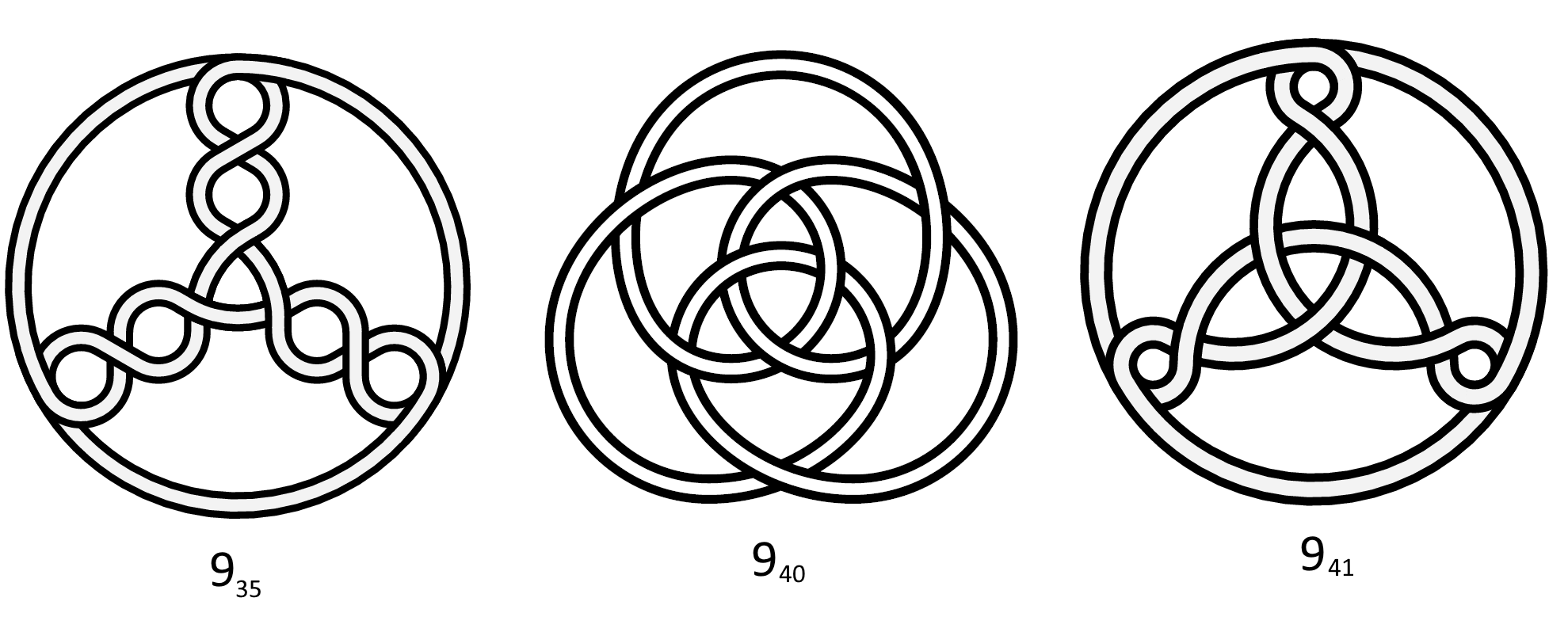}
    \captionof{figure}{Three rotationally symmetric diagrams of $9_{35}, 9_{40}$ and $9_{41}$ taken from Wikipedia.}
\end{center}

Following up the strategy same strategy we did for $9_{47}$ we get the following
\begin{theorem}\label{thm: words for order three for cross 9}
In $\tB$ we have
\begin{enumerate}
    \item The gallery
    \begin{align*}
        \;ABCABCDCABCDCDCBCDCABCDCACDCA\\
        \;BCABCDCBCDCABCDCABCDCABCDCBCD\\
        \;CDCABCABCDCABCDCACDCABCABCDCA\\
        \;BCDCABCABCDCACDCABCDCACDCABCA\\
        \;BCDCABCDCABCDCABCDCABCDCABCDC\\
        \;BCDCABCDCABCDCABCDCABCDCABCDC
    \end{align*}
    has order three and length $174$. It represents $9_{35}$.
    
    \item The gallery
    \begin{align*}
        \;&ABCABCDCBCDC\textbf{BCACDC}ABCDCABCDCABCDCACDC\\
        \;&ABCDCABCDCABCDCABCDCACDCABCDCABCDCABCDC\\
        \;&BCDCABCDCABCDCDCABCDCABCDCBCDCABCDCABCDC
    \end{align*}    
    has order three and length $116$. It represents $9_{40}$. The highlighted part represents the rotation introduced to fix the torsion.
    
    \item The gallery
    \begin{align*}
        \;ABCDCABCDCABCDCDCABCDCABCDCABCD\\
        \;CABCDCABCDCABCDCABCDCABCDCBCDCA\\
        \;BCDCABCDCABCDCACDCDCABCDCABCDCA\\
        \;BCABCDCABCDCABCDCABCDCDCABCDCAB\\
        \;CDCACDCABCDCABCDCABCDCDCABCDCBC\\
        \;DCABCDCABCDCBCDCABCDCABCABCDCDC
    \end{align*}
    has order three and length $186$. It represents $9_{41}$.

    \item The gallery
    \begin{align*}
    \;ACDCBCDCABCDCABCDCABCDCABCABCDCABCDCABCDCABCDCABCD\\
    \;CBCDCABCDCABCDCABCABCDCABCDCABCDCABCDCABCDCABCDCAB\\
    \;CABCDCABCDCABCDCABCDCABCDCABCDCABCDCABCDCBCDCABCDC\\
    \;ABCDCABCDCABCDCABCDCABCABCDCABCDCABCDCABCDCABCDCAB\\
    \;CDCBCDCABCDCABCDCABCDCABCDCABCDCABCDCABCDCABCABCDC
    \end{align*}
    has order three and length $250$. It represents $9_{47}$.
\end{enumerate}
\end{theorem}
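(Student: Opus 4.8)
The plan is to verify each of the four galleries by the same construction pipeline that was laid out in Section~\ref{sec: translation from the cubes to the pyramids} and illustrated for $9_{47}$, so the proof is really a catalogue of four instances of Theorem~\ref{theorem: piece change from cubes to pyramid} together with a knot-identification step. For each knot $9_i$, $i\in\{35,40,41,47\}$, I would first exhibit a planar diagram with manifest threefold symmetry (the diagrams referenced from Wikipedia), organised into a central region and concentric annuli as in the $9_{47}$ worked example. Then I carry out the inductive cube construction: build the innermost piece in a small cube preserving the rotation axis and the relevant over/under crossings, forget its interior and record only the boundary directions, attach the next annulus as a tangle compatible with the symmetry, enlarge the ambient cube as needed, and repeat until all rings are exhausted. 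Gluing the three copies of the same color along the common axis yields the cubic gallery realising $9_i$ with threefold symmetry, hence a cubic word together with its torsion rotation $\tau$ as in Proposition~\ref{prop: cubesrotation}.

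Next I would reduce to a bar gluing. By the shifting argument following Theorem~\ref{theorem: piece change from cubes to pyramid} (conjugation of $w^3=1$ by a prefix $s$), one may cyclically rotate the cubic piece so that it begins in the interior of a straight bar; if needed one also rescales the cubes to lengthen bars. For $9_{47}$, $9_{35}$ and $9_{41}$ the resulting $\tau$ is the identity (type $s=1$), while for $9_{40}$ it is the $90^{\circ}$ rotation reflected by the highlighted subword $BCACDC=T(-i)$ in the statement; in each case one reads off the $TNB$ word $g=(c_1,\dots,c_k)$ with $c_j\in\{f,u,d,r,l\}$ directly from the cubic piece, applies the substitution table of Theorem~\ref{theorem: piece change from cubes to pyramid} letter by letter (tracking cube parity, alternating even/odd, starting from a chosen parity for the first cube), appends $T(s)$, and finally undoes the initial shift inside the Coxeter word so that the $\tB$ piece follows the original cubic piece bar-for-bar. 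The displayed galleries in parts (1)--(4) are exactly the outputs of this procedure, and their stated lengths ($174$, $116$, $186$, $250$) are obtained by counting letters.

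Two checks remain. First, order three: by Theorem~\ref{theorem: piece change from cubes to pyramid} the concatenation $(T(g),T(g),T(g))$ automatically closes, so $T(g)$ represents an element $w$ with $w^3=1$; one should also confirm $w\neq 1$, which follows because the associated gallery is knotted and the trivial gallery is not. Second, knot type: the polygonal curve through consecutive pyramid centres of $(T(g),T(g),T(g))$ can be isotoped, one pyramid at a time, to the polygonal curve through the centres of the cubes of the underlying cubic gallery --- this is the same deformation argument used in Proposition~\ref{prop: knotrotational} --- and the cubic gallery was constructed to realise $9_i$; alternatively one computes the knot invariants (e.g. the Jones or Alexander polynomial, or the Dowker--Thistlethwaite code) of the resulting diagram and matches them against the tables for $9_{35}$, $9_{40}$, $9_{41}$, $9_{47}$.

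I expect the main obstacle to be step~(2) of the $9_{47}$-style strategy, namely merging the successive annular stages while simultaneously (a) keeping the threefold rotation exact, (b) avoiding self-intersection of the three rotated copies when the strings in an outer ring genuinely interact, and (c) reproducing every prescribed over/under crossing of the planar diagram. For $9_{47}$ the interactions in the outermost ring already forced a somewhat delicate routing (the piece lies in a $12\times12\times12$ cube); for $9_{41}$, whose final word has length $186$, the cubic piece is correspondingly larger and the crossing bookkeeping more involved, and for $9_{40}$ the nontrivial torsion $\tau$ adds the extra wrinkle that the naive concatenation does not close until the correction subword $T(-i)$ is inserted. Everything else --- the $TNB$ reading, the table substitutions, the length counts, and the final isotopy to the cubic knot --- is routine once the symmetric cubic piece has been exhibited, so the real content is producing that piece for each of the three remaining knots exactly as was done in detail for $9_{47}$.
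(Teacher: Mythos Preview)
Your proposal is correct and follows essentially the same pipeline as the paper's proof: build a threefold-symmetric cubic piece for each $9_i$, shift to a bar gluing, read off the $TNB$ word and $\tau$, apply the substitution table of Theorem~\ref{theorem: piece change from cubes to pyramid}, then unshift. The paper is simply more terse about the first step---rather than redoing the ring-by-ring construction for $9_{35},9_{40},9_{41}$, it directly records the resulting static-frame cubic words (with the permutation $(FUR)(BDL)$) and accompanying figures, then tabulates the shifted $TNB$ words and $\tau$ before invoking the translation theorem; your identification of the highlighted $BCACDC$ in part~(2) as $T(-i)$ matches the paper's $\tau(N)=B,\ \tau(B)=-N$.
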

\begin{proof}
Just as we did for $9_{47}$, we created a cubic piece that when rotated by $120$ degrees along certain axis and certain direction produces the desired knot. Due to the fact that there is a lot of choices in the process, these pieces are highly nonunique. Thus, we do not show the piece construction. Instead, we give the cubic word in static frame. In all of them the permutation is $(FUR)(BDL)$.
\begin{center}
    \begin{table}[h!]
        \centering
        \begin{tabular}{|c|c|}
        \hline
           Knot  & Static Reference Frame Word Code \\
        \hline
            $9_{35}$ &  $FFRDDLFDDDDFLFFURRDFFDLLLLLLBBBBBB$\\
        \hline
            $9_{40}$ &  $FRUUUUUBBBBBDDDDFFFLLLUU$\\
        \hline
            $9_{41}$ &  $UFFFFDDDDDDDDDRRRRUBBBUUUUBBBRRRRDDBBBUU$\\
        \hline
        \end{tabular}
        \caption{The code for the pieces of $9_{35}, 9_{40}$ and $9_{41}$.}
        \label{tab:my_label}
    \end{table}
\end{center}
We recall that the first letter of the Static Reference Frame Word Code represents the direction in which the last cube of the last piece glues to the first cube of the first piece.
These pieces look as shown in the figure below
\begin{center}
    \includegraphics[scale = 0.6]{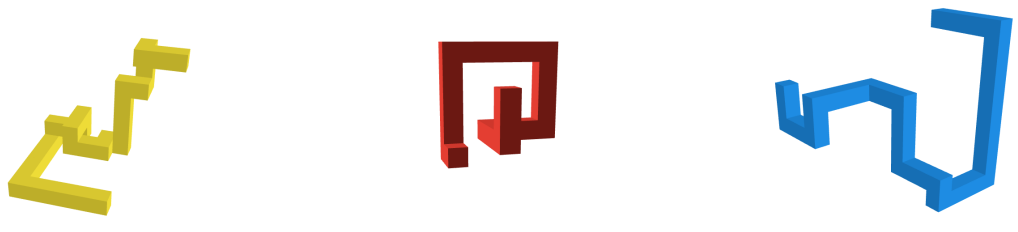}
    \captionof{figure}{The pieces of $9_{35}, 9_{40}$ and $9_{41}$, from left to right, respectively.}
\end{center}
When rotated, to create the cubic knot with threefold symmetry, we obtain the following cubic galleries that knot in the desired way.
\begin{center}
    \includegraphics[scale = 0.5]{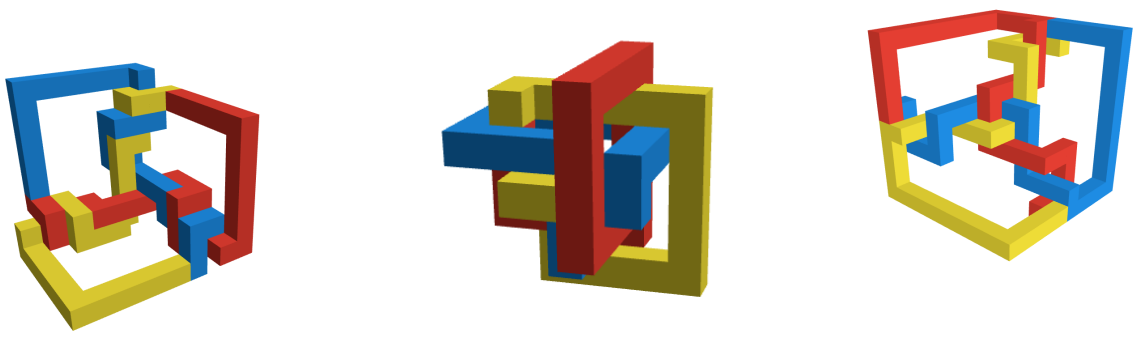}
    \captionof{figure}{The knots $9_{35}, 9_{40}$ and $9_{41}$ with threefold symmetry created from the corresponding pieces.}
\end{center}
We now proceed to the $TNB$ frame. We shifted the piece to guarantee that in all cases $\tau$ is a bar gluing. The next table shows the dynamic reference frame we used and the specific $\tau$ obtained. In all the cases the piece shifting was done just by \say{moving} the piecesome cubes into the next bar.
\begin{center}
    \begin{table}[h!]
        \centering
        \begin{tabular}{|c|c|c|}
        \hline
           Knot  & $TNB$ word for shifted piece & $\tau$\\
        \hline
            $9_{35}$ & $fufdlflurffflduflufurfluffffflffff$ & $\tau = 1$\\
        \hline
            $9_{40}$ & $fffflfffflffflffdfflfful$ & $\tau(N) =B, \tau(B) = -N$.\\
        \hline
            $9_{41}$ &  $udfffdfffffffflfffldffufffdffrfffdflfflf$ & $\tau = 1$\\
        \hline
        \end{tabular}
        \caption{The code for the pieces of $9_{35}, 9_{40}$ and $9_{41}$ in $TNB$ frame.}
        \label{tab:my_label}
    \end{table}
\end{center}
Finally, with Theorem \ref{theorem: piece change from cubes to pyramid}, we converted into the $\tB$ gallery and then we unshifted for the gallery to fit the precise shape of the original figure. This concludes the proof.
\end{proof}

\section{Questions}\label{sec: questions}

\noindent We bring this paper to an end by raising three questions inspired by our work.

\subsection{Question 1: Triviality of knots}

The first one relates to our searches in $\tB$. As we have said above, great part of our work was checking lists of options that were not eliminated by the reduction process or other criteria. In many cases, visually the triviality of the corresponding knot was evident, as the knot was clearly a wide circle

\begin{question}
    Is there a criteria to guarantee triviality of a gallery from the word itself that describes the gallery? 
\end{question}

For example, the word $CABCDCBACBACDACBCDCBCDACBA$ produces the following trivial knot
\begin{center}
    \includegraphics[scale = 0.3]{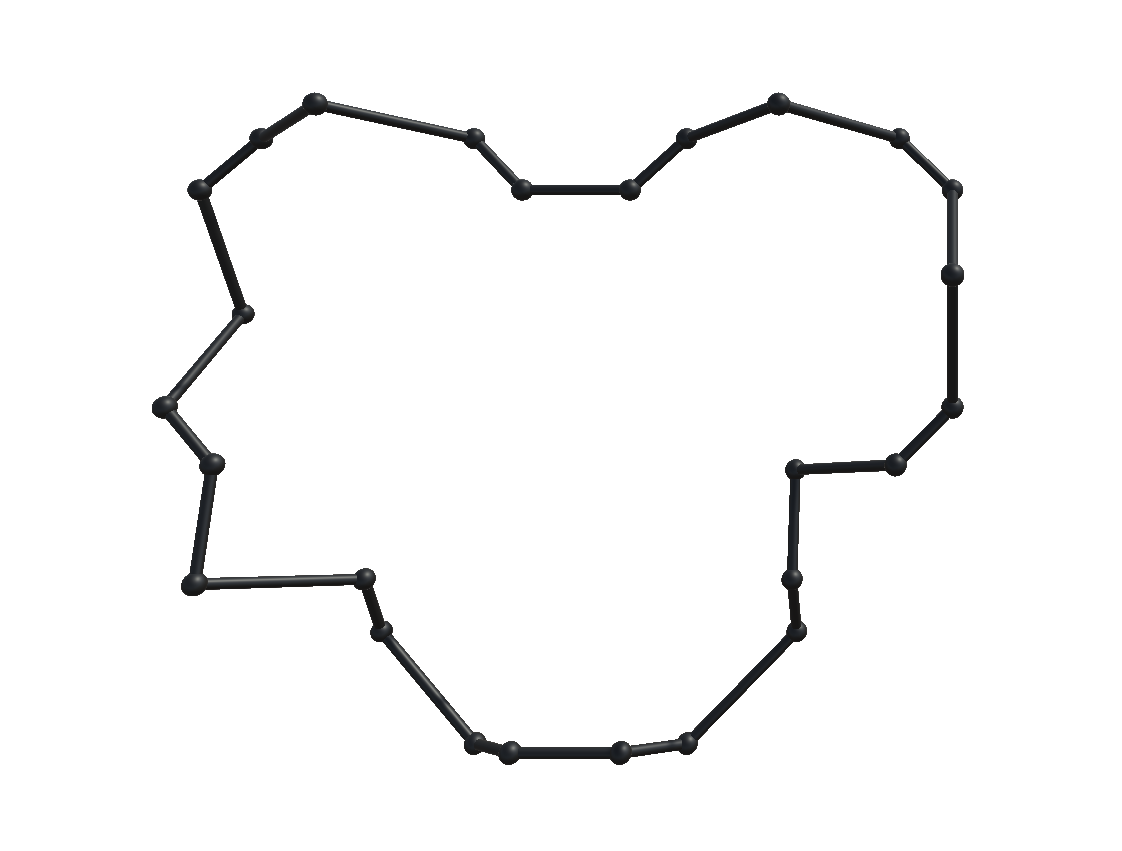}
    \captionof{figure}{$CABCDCBACBACDACBCDCBCDACBA$}
\end{center}
Looking at it, it is obviously a trivial knot. However, from the word itself we cannot distinguish this from any other word. Of course, this is a question that makes sense for any set of conditions with which we construct knots from sticks. However, there is a vast literature on the geometry of Coxeter complexes and their representation theory. It might be possible to find such a criteria in some of these cases.

\subsection{Question 2: Symmetries and symmetric minima}

Our second question is about the symmetries possessed by the lattices where our knots are constructed. As we have discussed, in many situations the knots that present these symmetries are not minimas of the different stick numbers. Thus we ask the following

\begin{question}
    Is there a criteria to determine from the particular lattices, and how their symmetries act on them, that minimal knots cannot posses certain of these symmetries? 
\end{question}
In all the cases we know this happens is because we first find the minimal stick number and then, because it is not divisible by some appropriate number, we rule out the symmetry from manifesting. The point of the above is to predict such minima do not have certain symmetries without knowing the explicit value of the stick numbers.

To give a particular example in a different lattice, we return to the (sh) lattice. This is the honeycomb lattice and it admits three rotational symmetry. The following is a trefoil with threefold symmetry in (sh). In the picture we put two consecutive layers of the honeycomb lattice, without putting the complete edges that join the parallel faces. We do this to make clear how the trefoil is constructed. 

\begin{center}
    \includegraphics[scale = 0.2]{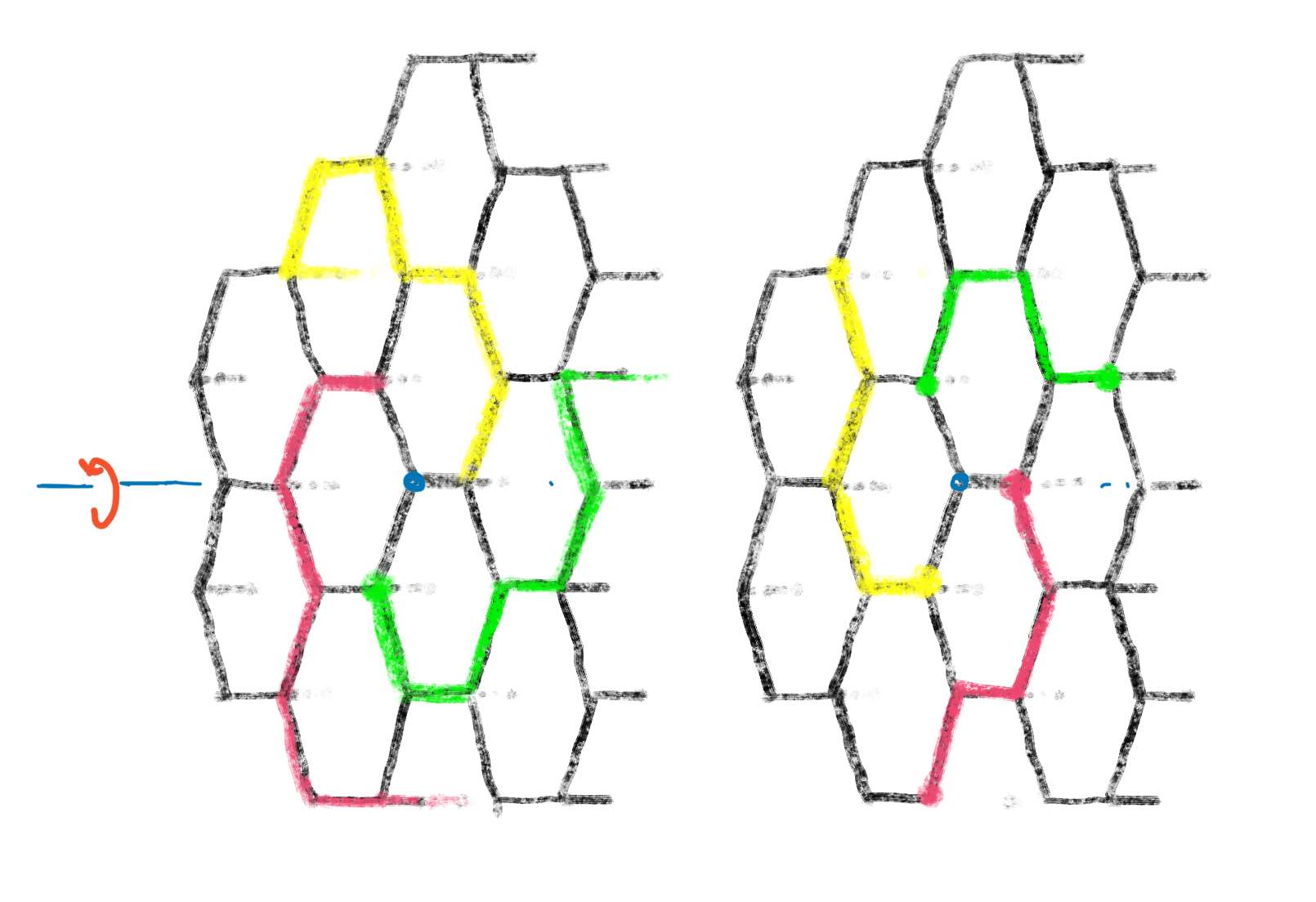}
    \captionof{figure}{A trefoil invariant under a threefold symmetry of the (sh) lattice}
\end{center}

The length of the piece is $12$, which justifies the inequality 
\begin{equation*}
    8\le l_3(\mbox{sh})\le 12.
\end{equation*}
we mentioned in the introduction. The lower bound is $8$ because a threefold symmetric trefoil must have at least length $24$. However, the stick number is $20$, so the minimums are not 3-periodic.

\subsection{Question 3: Is the trefoil always the minimum?}

Our final question is in the same spirit of the previous one and is based in the following observation: in all the cases known to the authors, the minimal knotting number is \textit{always} achieved by a trefoil. These cases are the overall knotting number, orthogonal stick number, cubic lattice, the honeycomb lattice (sh), the other lattices from \cite{mannminimalknottingnumbers} and the equilateral knots.
We suspect that is the case in $\tB$ as well.

Of course, this makes sense because the trefoil is the simplest nontrivial knot. This leads to our final question.

\begin{question}
    Is it true that for general conditions on the sticks used to construct knots, the minimal knotting number is achieved by trefoils? If so, is it exclusively by trefoils?
\end{question}

Somewhat equivalently, we could instead ask if it is possible to find a set of conditions on the sticks used to construct knots that, despite admitting trefoil knots to be constructed out of them, do not present trefoils as minima for the knotting number? If examples are to serve as our guide, such a construction doesn't seem possible. However, the amount of possibilities unexplored might hold such an example.


\bibliographystyle{acm}
\bibliography{Bibliography}

\end{document}